%
%
%
%
\documentclass{amsart}

\usepackage{latexsym}
\usepackage{amssymb}
\usepackage{amsmath,mathabx,mathtools}

\newtheorem{theorem}{Theorem}[section]
\newtheorem{lemma}[theorem]{Lemma}
\newtheorem{proposition}[theorem]{Proposition}

\theoremstyle{definition}
\newtheorem{definition}[theorem]{Definition}

\theoremstyle{remark}

\numberwithin{equation}{section}



\newcommand{\C}{\mathbb{C}}
\newcommand{\R}{\mathbb{R}}

\newcommand{\be}{\beta}
\newcommand{\lm}{\lambda}

\newcommand{\Gm}{\Gamma}

\newcommand{\re}{\operatorname{Re}}

\newcommand{\f}{\varphi}

\newcommand{\ep}{\varepsilon}

\newcommand{\Aut}{\operatorname{Aut}}

\newcommand{\Hh}{\mathcal H}
\newcommand{\K}{\mathcal K}
\newcommand{\erg}{\operatorname{Erg}}
\newcommand{\la}{\langle}
\newcommand{\ra}{\rangle}
\newcommand{\bb}{\bar B_n}

\newcommand{\mix}{\mathrm{mix}}
\newcommand{\bin}{\mathrm{bin}}
\newcommand{\wh}{\widehat}

\newcommand{\op}{\mathrm{op}}
\newcommand{\CP}{\mathrm{CP}}
\newcommand{\Span}{\mathrm{Span}}

\newcommand{\J}{\mathcal J}
\newcommand{\id}{\mathrm{id}}

\begin{document}

\title[Haaagerup property and invariant states]{The Haagerup property for groups and for tracial von Neumann algebras in terms of invariant and mixing states}

\author{Paul Jolissaint}
\address{Universit\'e de Neuch\^atel,
       Institut de Math\'ematiques,       
       E.-Argand 11,
       2000 Neuch\^atel, Switzerland}
       
\email{pajolissaint@gmail.com}

\subjclass[2020]{Primary 22D55, 46L55, 46l0; Secondary 22D40}

\date{\today}

\keywords{Locally compact groups, unitary representations, $C^*$-algebras, mixing actions, tracial von Neumann algebras, Haagerup property, invariant states, relative property H, mixing binormal states}

\begin{abstract}
The aim of the article is to provide  characterizations of the Haage-rup property for locally compact, second countable groups in terms of approximations of some non-ergodic invariant states by mixing ones for actions on unital $C^*$-algebras one the one hand, and for pairs of tracial von Neumann algebras by mixing binormal states on the other hand.
\end{abstract}

\maketitle

\section{Introduction}

The present article aims to propose two characterizations of the Haagerup property: one for locally compact, second countable (lcsc) groups, and the other one for pairs of tracial von Neumann algebras (i.e. finite von Neumann algebras equiped with a faithful, normal tracial state).

\bigskip

In the first part of this article, $G$ denotes a lcsc group; we assume furthermore that it is non-compact because the compact case is not relevant to the Haagerup property \cite{ccjjv}. Recall that the latter property is often interpreted as
a strong negation of Kazhdan's property (T) (or as a weak form of amenability, but we won't need that interpretation here). Denoting by $P_{0,1}(G)$ the convex set of all continuous, normalized positive definite functions on $G$ that vanish at infinity, recall that $G$ has the \textit{Haagerup property} if there exists a sequence $(\f_n)_{n\geq 1}\subset P_{0,1}(G)$ which converges to $1$ uniformly on compact subsets of $G$.

Before presenting the motivation and the content of the first part of our article, we need to recall some definitions and fix some notation.

We consider here the following two types of actions of $G$:
\begin{itemize}
\item Continuous actions of $G$ on compact metrizable spaces; if $X$ is such a space and if $G$ acts continuously by homeomorphisms on $X$, we say that the pair $(X,G)$ is a $G$-\textit{dynamical system}, or a simply a \textit{dynamical system} if $G$ is fixed, and we denote by $M(X)$ the compact convex set of all probability measures on $X$ equipped with the weak$^*$ topology, by $M_G(X)$ the closed convex subset of $G$-invariant elements of $M(X)$, by $\erg_G(X)$ the set of extreme points of $M_G(X)$, and finally by $M_{G,m}(X)$ the set of \textit{mixing} measures, i.e. elements $\mu\in M_G(X)$ such that, for all Borel sets $A,B\subset X$, one has
\[
\lim_{g\to\infty}\mu(gA\cap B)=\mu(A)\mu(B).
\]
\item Continuous actions of $G$ on separable, unital $C^*$-algebras: if $A$ is such a $C^*$-algebra, an \textit{action} of $G$ on $A$ is a homomorphism $\alpha:G\rightarrow \Aut(A)$ which is continuous in the sense that, for every $a\in A$, the map $g\mapsto \alpha_g(a)$ is norm continuous. We say that such a triple $(A,G,\alpha)$ is a $G$-$C^*$\textit{-dynamical system}, or simply a $C^*$\textit{-dynamical system} if $G$ is fixed.
We denote by $S(A)$ the convex compact set of all states on $A$ equipped with the weak$^*$ topology, by $S_G(A)$ the subset of all $G$-invariant ones, by $\erg_G(A)$ the set of all extreme points of $S_G(A)$ and by $S_{G,m}(A)$ the set of all \textit{mixing} states, i.e. the set of elements $\f\in S_G(A)$ such that 
\[
\lim_{g\to\infty}\f(\alpha_g(a)b)=\f(a)\f(b)
\]
for all $a,b\in A$. 
\end{itemize}
Observe that $M_G(X)$ (resp. $S_G(A)$) might be empty, and that
every dynamical system $(X,G)$ is a special case of a $C^*$-dynamical system since the former corresponds to $A=C(X)$, the commutative $C^*$-algebra of continuous functions on $X$ equipped with the action of $G$ by left translations: $(g\cdot a)(x)=a(g^{-1}x)$ for all $a\in C(X)$, $g\in G$ and $x\in X$, and due to Riesz representation of states on $C(X)$ by regular probability measures on $X$.

\bigskip

In the remarkable article \cite{GW}, E. Glasner and B. Weiss obtained an outstanding characterization of property (T) for a locally compact, second countable group $G$. We reproduce (not \textit{verbatim}) A. Valette's summary of the main results of \cite{GW} as it appears in \cite[Section 7.5.2]{ccjjv}:
\textit{The following conditions are equivalent:}
\textit{\begin{enumerate}
\item [(1)] $G$ has property (T);
\item [(2)] for every dynamical system $(X,G)$ with $M_G(X)\not=\emptyset$, $\erg_G(X)$ is closed in $M_G(X)$;
\item [(3)] let us denote by $\Sigma$ the metrizable compact set of all closed subsets of the one-point compactification $G^+=G\cup\{\infty\}$ containing $\infty$; then $\erg_G(\Sigma)$ is closed in $M_G(\Sigma)$;
\item [(4)] the set $\erg_G(\Sigma)$ is not dense in $M_G(\Sigma)$.
\end{enumerate}}
Then, A. Valette wrote: \textit{According to the philosophy that, to any characterization of property (T) there is a parallel characterization of the Haagerup property, there should be a definition of the Haagerup property corresponding to the above definition of property (T). What is it?}

\bigskip

The first aim of the present note is to propose such a characterization, which is our main result. It should be pointed out that essentially a proof of the same result for countable groups is sketched and published in \cite[Theorem 13.21]{G}. Nevertheless, we believe that a detailed proof at this level of generality, in particular the $C^*$-algebraic aspect, is worth publishing.

\begin{theorem}\label{HAP1}
Let $G$ be a locally compact, second countable group. 
\begin{enumerate}
\item [(1)] Suppose that there exists a $C^*$-dynamical system $(A,G,\alpha)$ which has the follownig property: there exists $\f\in S_G(A)\setminus \erg_G(A)$ and $(\f_n)_{n\geq 1}\subset S_{G,m}(A)$ such that $\f=w^*-\lim_n \f_n$. Then $G$ has the Haagerup property. 
\item [(2)] Conversely, if $G$ has the Haagerup property, there exists a non-ergodic measure $\nu\in M_G(\Sigma)$ and a sequence $(\eta_n)_{n\geq 1}\subset M_{G,m}(\Sigma)$ such that $\nu=w^*-\lim_{n}\eta_n$. More precisely, $S_{G,m}(\Sigma)$ contains more than one element and its weak$^*$ closure is convex.
\end{enumerate}
\end{theorem}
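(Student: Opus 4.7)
\emph{Part (1).} The hypothesis that $\f$ is non-ergodic is equivalent to the existence of a non-trivial projection in the von Neumann algebra of invariants $\pi_\f(A)'\cap U^\f(G)'$; applying such a projection to $\xi_\f$ and extracting its component orthogonal to $\xi_\f$ yields an invariant unit vector $\zeta\in H_\f\cap\xi_\f^\perp$. By cyclicity of $\xi_\f$ under $\pi_\f(A)$, for any $\epsilon>0$ there is $a\in A$ with $\|\pi_\f(a)\xi_\f-\zeta\|<\epsilon$; then $|\f(a)|=|\la\pi_\f(a)\xi_\f,\xi_\f\ra|<\epsilon$ and $\f(a^*a)=\|\pi_\f(a)\xi_\f\|^2$ is $O(\epsilon)$-close to $1$. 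For each $n$ the function
\[
\psi_n(g):=\f_n\bigl(a^*\al_g(a)\bigr)-|\f_n(a)|^2=\la U^{\f_n}_g\xi_{a,n},\xi_{a,n}\ra,\qquad \xi_{a,n}:=\pi_n(a)\xi_n-\f_n(a)\xi_n,
\]
is continuous and positive definite (it is a matrix coefficient on $\xi_n^\perp$); invariance of $\f_n$ together with the mixing condition applied to $a^*$ and $a$ forces $\psi_n(g)\to 0$ as $g\to\infty$, so $\psi_n\in C_0(G)$. Weak-$*$ convergence $\f_n\to\f$ yields the pointwise limit $\psi_n(g)\to\psi(g):=\la U^\f_g\xi_a,\xi_a\ra$ with $\xi_a=\pi_\f(a)\xi_\f-\f(a)\xi_\f$ close to $\zeta$; since $\zeta$ is $U^\f$-invariant, $|\psi(g)-1|$ is $O(\epsilon)$ uniformly in $g$. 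The family $\{\psi_n\}$ is equicontinuous on $G$ (norm-continuity of $g\mapsto\al_g(a)$, uniform bound $\|\psi_n\|_\infty\le\|a\|^2$), so pointwise convergence upgrades to uniform convergence on any prescribed compact $K\subset G$. For $n$ large, the normalization $\tilde\psi_n:=\psi_n/\psi_n(e)$ is in $P_{0,1}(G)\cap C_0(G)$ and lies $O(\epsilon)$-close to $1$ on $K$; a diagonal extraction over $K_m\nearrow G$, $\epsilon_m\searrow 0$ produces a Haagerup approximation.

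\smallskip

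\emph{Part (2).} Assume $G$ has the Haagerup property. The Gaussian (or Bernoulli) functor applied to a $C_0$ unitary representation with almost invariant vectors yields a mixing p.m.p.\ $G$-system $(Y,\mu)$ with almost invariant vectors in $L^2_0(Y,\mu)$. For any Borel set $B\subset Y$ the map
\[
\Phi_B:Y\to\Sigma,\qquad y\mapsto\ol{\{g\in G:g^{-1}y\in B\}}\cup\{\infty\},
\]
is Borel and $G$-equivariant, so $(\Phi_B)_*\mu\in S_{G,m}(\Sigma)$; since the fixed-point measure $\delta_{\{\infty\}}$ is trivially in $S_{G,m}(\Sigma)$, choosing $B$ not $\mu$-equivalent to a $G$-invariant set already gives $|S_{G,m}(\Sigma)|\geq 2$. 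For the convexity of $\ol{S_{G,m}(\Sigma)}^{w^*}$, given $\nu_1,\nu_2\in S_{G,m}(\Sigma)$ realized from mixing systems $(Z_i,\eta_i)$ via equivariant maps to $\Sigma$, and $t\in(0,1)$, one uses the almost invariant structure on $(Y,\mu)$ to produce Borel sets $A_n\subset Y$ with $\mu(A_n)\to t$ and $\mu(gA_n\triangle A_n)\to 0$ uniformly on compacta; the almost-equivariant interpolant $(y,z_1,z_2)\mapsto\Phi_{B_1}(z_1)$ on $A_n$, $\Phi_{B_2}(z_2)$ on $A_n^c$, pushes the product mixing measure on $Y\times Z_1\times Z_2$ to $\Sigma$, and a F\o lner-type averaging corrects the defect of equivariance to produce a genuinely $G$-invariant mixing measure weak-$*$ close to $t\nu_1+(1-t)\nu_2$. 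Once convexity is known, picking any two distinct mixing measures $\eta_1\neq\eta_2$ (ergodic, since mixing implies ergodic, hence extremal in $M_G(\Sigma)$), the midpoint $\nu:=\tfrac12(\eta_1+\eta_2)$ is non-ergodic and lies in $\ol{S_{G,m}(\Sigma)}^{w^*}$, furnishing the required sequence $(\eta_n)\subset S_{G,m}(\Sigma)$ with $\nu=w^*\text{-}\lim\eta_n$.

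\smallskip

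\emph{Main obstacle.} The delicate point is the convexity of $\ol{S_{G,m}(\Sigma)}^{w^*}$: the interpolation sketched above is only almost-$G$-equivariant because $A_n$ is only almost invariant, and turning the resulting almost-invariant measures into honest invariant measures that are mixing and converge to $t\nu_1+(1-t)\nu_2$ requires a careful limiting procedure. The Haagerup hypothesis enters precisely here, providing on $(Y,\mu)$ the almost-invariant data needed to close the defect of equivariance while preserving mixing in the weak-$*$ limit; this is the technical heart of Part (2).
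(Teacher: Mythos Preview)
Your Part~(1) is correct and in fact slightly more direct than the paper's argument. You work with each $\f_n$ and its GNS triple individually, extracting the $C_0$ positive-definite function $\psi_n$ as the diagonal coefficient of $\xi_{a,n}\in\xi_n^\perp$; the paper instead passes to the infinite tensor product $B=\bigotimes_n A$ with product state $\bigotimes_n\f_n$, but that device is not actually needed here. Your equicontinuity argument (coming from norm-continuity of $g\mapsto\al_g(a)$, uniformly in $n$) is a clean substitute for the paper's route via dominated convergence and \cite[Theorem 13.5.2]{DiC}.

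Part~(2), however, has a genuine gap. Your interpolant $(y,z_1,z_2)\mapsto \Phi_{B_1}(z_1)$ on $A_n$ and $\Phi_{B_2}(z_2)$ on $A_n^c$ is, as you note, not $G$-equivariant, and you propose to repair this by a ``F\o lner-type averaging''. But $G$ need not be amenable: the Haagerup property is strictly weaker (free groups, $\mathrm{SL}_2(\Z)$, etc.\ have it), so no F\o lner sequence is available in general, and there is no mechanism to average an almost-invariant probability measure into an honest invariant one --- let alone one that remains mixing. The paper avoids this problem entirely. Following Glasner--Weiss, it builds a map
\[
\theta_n\colon X\times\Sigma\times\Sigma\to\Sigma,\qquad
\theta_n(x,E_1,E_2)=\{g\in G\colon g^{-1}(x,E_1,E_2)\in D_n\}\cup\{\infty\},
\]
with $D_n=(B_n\times[e]\times\Sigma)\cup(B_n'\times\Sigma\times[e])$, where $B_n,B_n'$ are the disjoint almost-invariant sets of measure $\to\tfrac12$ produced by Lemma~\ref{lemma2}. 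Because $\theta_n$ is a ``return-times'' map it is \emph{exactly} $G$-equivariant by construction, so $\eta_n=(\theta_n)_*(\mu\times\nu_1\times\nu_2)$ is genuinely $G$-invariant and mixing with no averaging at all. The almost-invariance of $B_n,B_n'$ enters only in the weak-$*$ estimate $\eta_n\to\tfrac12(\nu_1+\nu_2)$, not in manufacturing invariance. This return-times trick is precisely the idea your sketch is missing.
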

The proof of Theorem \ref{HAP1} is largely inspired by the techniques used in \cite{GW} and it is contained in Section 3. Section 2 contains material needed in the latter section.

\bigskip

As is well known, the Haagerup property for (finite) von Neumann algebras and for pairs of such algebras plays also an important role: see e.g. \cite{Popa} and \cite{OOT}. Even though it was necessary to define and study it for arbitrary von Neumann algebras, and in particular for applications to quantum group von Neumann algebras (which are often infinite), it is basically a property of tracial von Neumann algebras. Indeed, among the several equivalent definitions, in the end they are all equivalent to that the finite von Neumann algebra $p(M\rtimes_\sigma\R)p$ has the Haagerup property where $\sigma$ is a modular action and $p$ is a finite projection with central support $z(p)=1$. See for instance \cite[page 82]{OOT}, \cite{CS} and \cite{OT}.

\bigskip

Recall e.g. from \cite{Jol} that a tracial von Neumann algebra $(M,\tau)$ has the \textit{Haagerup property} if there exists a net $(\phi_i)_{i\in I}$ of completely positive maps from $M$ to itself with the following properties:
\begin{enumerate}
\item [(a)] one has $\tau\circ\phi_i\leq \tau$ ($\phi_i$ is \textit{subtracial}), $\phi_i(1)\leq 1$ ($\phi_i$ is \textit{subunital}) and $\phi_i$ extends to a compact operator on $L^2(M,\tau)$ for every $i\in I$;
\item [(b)] for every $x\in M$, one has $\lim_i\Vert \phi_i(x)-x\Vert_2=0$.
\end{enumerate}

The latter property is independent of the chosen tracial state (\cite[Proposition 2.4]{Jol}), and it admits equivalent conditions via \textit{$M$-bimodules} (e.g. \cite[Theorem 3.4]{BF}, \cite[Theorem 9]{OOT}) : given a von Neumann algebra $M$, such a module is a Hilbert space $\Hh$ equipped with a (left) normal representation of $M$, denoted by $(x,\xi)\mapsto x\xi$, and with a commuting normal action of the opposite algebra $M^\op$ denoted by the right action $(y^\op,\xi)\mapsto \xi y$.

\bigskip

In the breakthrough article \cite{Popa}, S. Popa uses  the Haagerup property for suitable discrete groups to prove for the first time the existence of II$_1$ factors with trivial fundamental group, which solved a long standing open problem posed by R. V. Kadison. In fact, he needed  a relative version of the Haagerup property for pairs of finite von Neumann algebras that will be presented in details in the last section.

\bigskip

In this introductory section, we describe our next result only in the case of a single tracial von Neumann algebra $(M,\tau)$, the case of pairs $B\subset M$ being treated in Section 4.

Thus, given such a tracial von Neumann algebra, our next result is a characterization of the Haagerup property for $M$ in terms of the set of binormal states $\bin(M)$ on the algebraic tensor product $M\otimes M^\op$. 

Before recalling the latter notion, let us motivate our approach by the following observation in the framework of lcsc groups: given such a group $G$, recall from \cite{BR} that a unitary representation $(\pi,\Hh)$ of $G$ is \textit{mixing} if all its coefficient functions $g\mapsto \la \pi(g)\xi|\eta\ra$ ($\xi,\eta\in \Hh)$ vanish at infinity. In particular, all elements of $P_{0,1}(G)$ are coefficients of mixing representations. 

Recall also that the convex set $P_1(G)$ of all continuous, normalized positive definite functions on $G$ identifies with the state space of the maximal $C^*$-algebra $C^*(G)$ of $G$ (\cite[Chapter 13]{DiC}), and it follows from the definition of the Haagerup property that $G$ posseses this property if and only if $P_{0,1}(G)$ is weak*-dense in $P_1(G)$ as sets of states of $C^*(G)$ (see e.g. \cite[Proposition 3.1]{JolC0}). Coming from mixing representations of $G$, it is natural to call elements of $P_{0,1}(G)$ \textit{mixing} normalized positive definite functions on $G$.

Moreover, if $G$ is discrete, every element $\psi$ of $P_{1}(G)$ gives rise to a completely positive map $m_\psi$ on the von Neumann algebra $L(G)$ associated to $G$ and generated by the left regular representation $\lm$, so that $m_\psi(\lm(g))=\psi(g)\lm(g)$, $g\in G$. Then $m_\psi$ extends to the bounded linear operator of multiplication by $\psi$ on $\ell^2(G)$ which is compact if and only if $\psi\in P_{0,1}(G)$, i.e. if and only if the associated representation $\pi_\psi$ is mixing in the above sense. 

\bigskip

Following \cite[Page 10]{EL}, a state (i.e. a positive, normalized linear form) on the algebraic tensor product $M\otimes M^\op$ is \textit{binormal} if it is separately normal; there is then a natural correspondence between $\bin(M)$ and the set of unit vectors of $M$-bimodules; more precisely:
\begin{itemize}
\item  to every such vector $\xi$, one associates the element $\f_\xi\in \bin(M)$ defined by $\f_\xi(x\otimes y^\op)=\la x\xi y|\xi\ra$, $x,y\in M$;
\item conversely, if $\f\in \bin(M)$, the GNS triple $(\pi_\f,\Hh_\f,\xi_\f)$ applied to the pair $(M\otimes M^\op, \f)$
yields the pointed $M$-bimodule $(\Hh_\f,\xi_\f)$ such that $\f(x\otimes y^\op)=\la x\xi_\f y|\xi_\f\ra$; moreover, $\xi_\f$ is \textit{cyclic} in the following sense: the set $\{x\xi_\f y\colon x,y\in M\}$ is total in $\Hh_\f$. 
\end{itemize}
For future use, if $\Hh$ is a $M$-bimodule and $\xi\in \Hh$, we denote by $\Span(M\xi M)$ the set of all finite sums $\sum _i x_i \xi y_i$ with $x_i,y_i\in M$.

\bigskip

We also observe that $\bin(M)$ is weak*-dense in the state space of the tensor product $C^*$-algebra $M\otimes_\bin M^\op$ which is the completion of $M\otimes M^\op$ with respect to the $C^*$-norm
\[
\Vert x\Vert_\bin=\sup\{\Vert \pi_\f(x)\Vert\colon \f\in \bin(M\otimes_\bin M^\op)\}
\]
Thus $\bin(M)$ is a weak* dense convex subset of the state space of the unital $C^*$-algebra $M\otimes_\bin M^\op$. For all these facts, see \cite{EL}.

\bigskip

In order to state the next theorem, we briefly describe the subset of what we call \textit{mixing} binormal states on $M$; see Section 4 for detailed definitions. Let $\f\in \bin(M)$ and let $(\Hh_\f,\xi_\f)$ be the associated pointed $M$-bimodule. Then $\xi_\f$ is said to be \textit{(two-sided) bounded} if there exists a constant $c>0$ such that
\[
\Vert x\xi_\f\Vert\leq c\Vert x\Vert_2\quad \textrm{and}\quad \Vert \xi_\f x\Vert \leq c\Vert x\Vert_2\quad (x\in M).
\]
It turns out that such a vector yields a normal, completely positive map $\phi_\f:M\rightarrow M$ characterized by 
\[
\tau(\phi_\f(x)y)=\la x\xi_\f y|\xi_\f\ra \quad (x,y\in M).
\]
Boundedness of $\xi_\f$ implies moreover that $\phi_\f$ extends to a bounded linear operator $\wh{\phi_\f}$ on $L^2(M,\tau)$; then $\phi_\f$ is called \textit{$L^2$-compact} if $\wh{\phi_\f}$ is a compact operator.

Finally, $\f\in \bin(M)$ is called \textit{mixing} if $\xi_\f$ is bounded and if $\wh{\phi_\f}$ is $L^2$-compact. We denote by $\bin_\mix(M)$ the set of all mixing binormal states on $M$.

\bigskip

Our second result is then the following promised characterization of the Haagerup property for finite von Neumann algebras which is proved in Section 4.

\begin{theorem}\label{HAP2}
Let $(M,\tau)$ be a tracial von Neumann algebra. Then $M$ has the Haagerup property if and only if $\bin_\mix(M)$ is dense in $\bin(M)$.
\end{theorem}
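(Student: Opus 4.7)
The argument rests on the dictionary, sketched in the introduction, between pointed $M$-bimodules with two-sided bounded cyclic vector $\xi$, binormal states whose GNS vector is two-sided bounded, and normal completely positive maps $\phi_\xi:M\to M$ characterized by $\tau(\phi_\xi(x)y)=\la x\xi y,\xi\ra$; $\bin_\mix(M)$ corresponds precisely to those CP maps whose $L^2$-extension is compact. Conversely, any normal CP map $\phi:M\to M$ with $\tau\circ\phi$ dominated by a multiple of $\tau$ reconstructs such a pointed bimodule (the Stinespring-type bimodule $M\otimes_\phi L^2(M,\tau)$ with cyclic vector $1\otimes\hat 1$), and its associated binormal form is $x\otimes y^\op\mapsto\tau(\phi(x)y)$, whose positivity follows directly from complete positivity of $\phi$.

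For $(\Leftarrow)$ I take the canonical state $\f_\tau(x\otimes y^\op):=\tau(xy)$ of the trivial bimodule $(L^2(M,\tau),\hat 1)$, whose CP map is $\id_M$. By hypothesis $\f_\tau$ is the weak$^*$ limit of some $\f_n\in\bin_\mix(M)$, corresponding to normal CP maps $\phi_n:M\to M$ with $\wh{\phi_n}$ compact; the weak$^*$ convergence amounts to $\wh{\phi_n}\to\id$ in the weak operator topology on $L^2(M,\tau)$. Since weak and strong operator closures of convex subsets of $B(L^2(M,\tau))$ coincide, convex combinations $\psi_k:=\sum_n c_{k,n}\phi_n$ yield normal CP maps, still $L^2$-compact, with $\|\psi_k(x)-x\|_2\to 0$ for every $x\in M$. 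A standard rescaling (or passage through an equivalent formulation of the Haagerup property, cf.~\cite[Proposition 2.2]{Jol}) then produces the subunital, subtracial approximating net witnessing that $M$ has the Haagerup property.

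For $(\Rightarrow)$, given $\f\in\bin(M)$ with GNS bimodule $(\Hh_\f,\xi_\f)$, I first reduce to the case where $\xi_\f$ is two-sided bounded. The normal positive forms $a\mapsto\la a\xi_\f,\xi_\f\ra$ on $M$ and $b\mapsto\la\xi_\f b,\xi_\f\ra$ on $M^\op$ are implemented by densities $T_L,T_R\in L^1(M)_+$; spectral projections $p:=\chi_{[0,N]}(T_L)$ and $q:=\chi_{[0,N]}(T_R)$ above a cutoff $N$ produce $p\xi_\f q$ that is two-sided bounded (by $\sqrt{N}$) and converges to $\xi_\f$ in norm as $N\to\infty$. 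Since $\eta\mapsto\la x\eta y,\eta\ra$ is continuous, $\f_{p\xi_\f q}\to\f$ weak$^*$, and a diagonal argument reduces to $\xi_\f$ itself two-sided bounded, so $\f$ corresponds to a normal CP map $\phi_\f:M\to M$ with $\wh{\phi_\f}$ bounded. Now take $(\phi_n)$ witnessing Haagerup for $M$ and set $\psi_n:=\phi_n\circ\phi_\f$: normal CP with $L^2$-extension $\wh{\phi_n}\wh{\phi_\f}$ equal to compact $\circ$ bounded, hence compact. The dictionary turns $\psi_n$ into a positive binormal form $\tilde\f_n(x\otimes y^\op):=\tau(\psi_n(x)y)$ with two-sided bounded GNS vector, and mass
\[
c_n:=\tilde\f_n(1\otimes 1)=\tau(\phi_n(\phi_\f(1)))\to\tau(\phi_\f(1))=\f(1\otimes 1)=1
\]
as $\|\phi_n(\phi_\f(1))-\phi_\f(1)\|_2\to 0$. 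For $n$ large, $\f_n:=c_n^{-1}\tilde\f_n\in\bin_\mix(M)$, and for every $x,y\in M$
\[
\f_n(x\otimes y^\op)=c_n^{-1}\tau(\phi_n(\phi_\f(x))y)\to\tau(\phi_\f(x)y)=\f(x\otimes y^\op),
\]
using $\|\phi_n(\phi_\f(x))-\phi_\f(x)\|_2\to 0$, continuity of $\la\,\cdot\,,\widehat{y^*}\ra_{L^2}$, and $c_n\to 1$. Hence $\f_n\to\f$ weak$^*$, proving density of $\bin_\mix(M)$ in $\bin(M)$.

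The most delicate step I anticipate is the density of two-sided bounded vectors in an arbitrary $M$-bimodule used in the reduction for $(\Rightarrow)$, which requires identifying the one-sided normal positive functionals attached to $\xi_\f$ with their $L^1$-densities and carrying out spectral truncations carefully; the remaining ingredients (composition of CP maps, the convex-combination passage between weak and strong operator closures, and the normalization $c_n\to 1$) are essentially routine once the CP/bimodule dictionary is in hand.
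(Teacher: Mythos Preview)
Your overall strategy matches the paper's: for $(\Rightarrow)$ you truncate to a bounded cyclic vector (this is the paper's Proposition~\ref{boundedv}) and then compose with a Haagerup approximating net (the paper packages this via Lemma~\ref{tensor}, and uses unital trace-preserving maps from \cite{BF} to avoid your normalization $c_n$, but these are cosmetic differences). The $(\Rightarrow)$ direction is correct and essentially identical to the paper's argument.

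There is, however, a genuine gap in your $(\Leftarrow)$. You assert that weak$^*$ convergence $\f_n\to\f_\tau$ gives $\wh{\phi_n}\to\id$ in the weak operator topology on $L^2(M,\tau)$, and then pass to strong convergence via convex combinations. But weak$^*$ convergence of the binormal states only yields $\la\wh{\phi_n}\hat x,\hat{y^*}\ra\to\la\hat x,\hat{y^*}\ra$ for $x,y\in M$, i.e.\ convergence tested against the dense subspace $\hat M$, not against all of $L^2$. Upgrading this to genuine WOT convergence (so that the Mazur-type passage to SOT applies) would require $\sup_n\|\wh{\phi_n}\|<\infty$, and nothing in the definition of $\bin_\mix(M)$ bounds the two-sided constants of the vectors $\xi_{\f_n}$ uniformly in $n$; without that bound the convex-combination step can simply fail. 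The paper circumvents this by working not with the operators $\wh{\phi_n}$ but with the normal states $\la\,\cdot\,\xi_n\mid\xi_n\ra$ and $\la\xi_n\,\cdot\mid\xi_n\ra$ on $M$, which sit in the unit ball of $M_*$ regardless of the bounds on $\xi_n$: after first computing $\|x\xi_n-\xi_nx\|\to 0$ directly from the weak$^*$ convergence, it invokes \cite[Lemmas~13.3.11 and 13.1.11]{AP} to take convex combinations of the $\xi_n$'s (in a direct-sum bimodule) achieving norm convergence of these states to $\tau$, and to perturb to subtracial maps; only then does a Kadison--Schwarz estimate give $\|\phi_i(x)-x\|_2\to 0$. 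Your final ``standard rescaling'' cannot repair the earlier step, since boundedness is needed \emph{before} the convex-combination argument, not after it.
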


\bigskip
\textit{Acknowledgements.} I am very greatful to E. Glasner for having pointed out his result on Theorem \ref{HAP1}, and to I. Vigdorovich and A. Valette for their useful and pertinent comments.

\section{Prerequisites for the proof of Theorem \ref{HAP1}}

\subsection{Actions on $C^*$-algebras and their invariant states}

In this section, we recall properties of ($C^*$-)dynamical systems that will be useful in the proof of Theorem \ref{HAP1}. Some results should be well known, but for the sake of clarity, we provide some proofs for the reader's convenience.

Let $(A,G,\alpha)$ be a unital $C^*$-dynamical system. We always assume that $S_G(A)$ is non-empty. Hence, $\erg_G(A)$ is non-empty either, as the former is the weak$^*$ closed convex hull of the latter by Krein-Milman Theorem.

Given $\f\in S_G(A)$, we denote by $(\pi_\f,H_\f,\xi_\f,u_\f)$ the Gel'fand-Naimark-Segal (GNS) construction associated to the pair $(A,\f)$: $\pi_\f:A\rightarrow B(H_\f)$ is a $*$-representation of $A$ on $H_\f$, $\xi_\f\in H_\f$ is a cyclic, unit vector such that
\[
\f(x)=\langle\pi_\f(x)\xi_\f|\xi_\f\rangle\quad (x\in A)
\]
and $u_\f$ is a unitary representation of $G$ on $H_\f$ characterized by
\[
u_\f(g)\pi_\f(x)\xi_\f=\pi_\f(\alpha_g(x))\xi_\f\quad (g\in G, \ x\in A).
\]
It implements the action $\alpha$ on $H_\f$ in the sense that
\[
u_\f(g)\pi_\f(x)u_\f(g^{-1})=\pi_\f(\alpha_g(x))\quad (g\in G,\ x\in A).
\]

\bigskip

Recall that, for a given dynamical system $(X,G)$, every element $\mu\in M_G(X)$ corresponds to a state in $S_G(C(X))$ still denoted by $\mu$ and defined by
\[
\mu(a)=\int_X a(x)d\mu(x)\quad (a\in C(X), x\in X).
\]
Then the GNS construction yields $H_\mu=L^2(X,\mu)$, $\xi_\mu=1_X$,  $\pi_\mu(a)\xi=a\cdot\xi$ for all $a\in C(X)$ and $\xi\in L^2(X,\mu)$ and $(u_\mu(g)\xi)(x)=\xi(g^{-1}x)$ for all $g\in G$, $\xi\in L^2(X,\mu)$ and $x\in X$.

For further use, we observe that, as $\xi_\mu=1_X$ is a cyclic vector for $\pi_\mu(C(X))$,
\[
\pi_\mu(C(X))'\cap u_\mu(G)'=L^\infty(X,\mu)\cap u_\mu(G)'=\{a\in L^\infty(X,\mu)\colon g\cdot a=a\ \forall g\in G\}
\]
is the space of $G$-fixed elements of $L^\infty(X,\mu)$.
See for instance \cite[Proposition 3.4.2]{GKP} for the equality $\pi_\mu(C(X))'=L^\infty(X,\mu)$. This means that $\mu$ is ergodic in the usual sense (i.e.  $\mu(A)(1-\mu(A))=0$ for every $G$-invariant Borel set $A$) if and only if $\pi_\mu(C(X))'\cap u_\mu(G)'=\C 1$. For general $C^*$-dynamical systems, one has (see for instance \cite[Exercise I.9.7]{TakI}):

\begin{proposition}\label{ergCstar}
Let $(A,G,\alpha)$ be a $C^*$-dynamical system such that $S_G(A)\not=\emptyset$, and let $\f\in S_G(A)$. Then $\f\in \erg_G(A)$ if and only if $\pi_\f(A)'\cap u_\f(G)'=\C 1$.
\end{proposition}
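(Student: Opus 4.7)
Both directions rest on the Radon--Nikodym-type correspondence in the GNS representation of $\f$: positive linear functionals $\psi\le\f$ on $A$ are in bijection with operators $T\in \pi_\f(A)'$ satisfying $0\le T\le 1$, via $\psi(x)=\la \pi_\f(x)T\xi_\f\mid \xi_\f\ra$, with $T$ uniquely determined. A preliminary observation I would record is that $u_\f(g)\xi_\f=u_\f(g)\pi_\f(1)\xi_\f=\pi_\f(\alpha_g(1))\xi_\f=\xi_\f$ for every $g\in G$, which will be used repeatedly.

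\textbf{($\Leftarrow$, commutant triviality implies ergodicity.)} Suppose $\f=\lambda\f_1+(1-\lambda)\f_2$ with $\f_1,\f_2\in S_G(A)$ and $0<\lambda<1$. Applying Radon--Nikodym to $\lambda\f_1\le \f$, I obtain $T\in \pi_\f(A)'$, $0\le T\le 1$, with $\lambda\f_1(x)=\la \pi_\f(x)T\xi_\f\mid\xi_\f\ra$. I would then set $T_g:=u_\f(g)^*Tu_\f(g)$ and check two things: (i) $T_g\in \pi_\f(A)'$, using $u_\f(g)\pi_\f(x)u_\f(g)^*=\pi_\f(\alpha_g(x))$; (ii) $T_g$ represents the same functional $\lambda\f_1$, using $u_\f(g)\xi_\f=\xi_\f$ together with the $G$-invariance of $\f_1$. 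By uniqueness in Radon--Nikodym, $T_g=T$, hence $T\in u_\f(G)'$. The hypothesis forces $T=c1$, and evaluating at $x=1$ yields $c=\lambda$, so $\f_1=\f$; similarly $\f_2=\f$. Thus $\f\in\erg_G(A)$.

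\textbf{($\Rightarrow$, ergodicity implies commutant triviality.)} Let $T\in\pi_\f(A)'\cap u_\f(G)'$. Decomposing into real and imaginary parts and rescaling, it suffices to treat $0\le T\le 1$. Define $\f_1(x)=\la \pi_\f(x)T\xi_\f\mid\xi_\f\ra$ and $\f_2=\f-\f_1$; both are positive since $T$ and $1-T$ lie in $\pi_\f(A)'_+$. Invariance under $G$ follows from the identities $\pi_\f(\alpha_g(x))=u_\f(g)\pi_\f(x)u_\f(g)^*$, $u_\f(g)\xi_\f=\xi_\f$, and $[T,u_\f(g)]=0$. Writing $\lambda:=\f_1(1)=\la T\xi_\f\mid\xi_\f\ra$: if $\lambda\in(0,1)$, extremality gives $\f_1/\lambda=\f$, i.e.\ $\la T\pi_\f(x)\xi_\f\mid\xi_\f\ra=\lambda\la \pi_\f(x)\xi_\f\mid\xi_\f\ra$ for all $x\in A$; since $\xi_\f$ is cyclic this implies $T\xi_\f=\lambda\xi_\f$, and then $T\pi_\f(x)\xi_\f=\pi_\f(x)T\xi_\f=\lambda\pi_\f(x)\xi_\f$ shows $T=\lambda 1$. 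The boundary cases $\lambda=0$ or $\lambda=1$ are handled by noting that $\la T\xi_\f\mid\xi_\f\ra=0$ (with $T\ge 0$) forces $T^{1/2}\xi_\f=0$ and then, by cyclicity and $T\in \pi_\f(A)'$, $T=0$; symmetrically for $\lambda=1$.

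\textbf{Main obstacle.} The only delicate point is the commutativity step $[T,u_\f(g)]=0$ in the $(\Leftarrow)$ direction: it is not immediate from the mere $G$-invariance of $\f_1$ and must be extracted from the uniqueness clause of the Radon--Nikodym correspondence together with the fact that $\xi_\f$ is fixed by $u_\f(G)$. Everything else is a direct verification once the correct functionals and operators have been identified.
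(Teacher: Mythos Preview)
Your argument is correct and is the standard Radon--Nikodym approach to this classical fact. Note, however, that the paper does not actually supply a proof of this proposition: it simply cites it as \cite[Exercise I.9.7]{TakI} and moves on. So there is no ``paper's own proof'' to compare against; your write-up is precisely the kind of proof one would give when asked to solve that exercise, and all the verifications (commutation of $T_g$ with $\pi_\f(A)$, uniqueness giving $T_g=T$, and the cyclicity argument forcing $T=\lambda 1$) are handled correctly.
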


We observe that, as $u_\f(g)\xi_\f=\xi_\f$ for every $g\in G$, the orthogonal subspace $(\C\xi_\f)^\perp\eqqcolon H_\f^0$ is $G$-invariant; we denote by $u_\f^0$ the restriction of $u_\f$ to $H_\f^0$.

\begin{proposition}\label{ergCompl}
Let $(A,G,\alpha)$ be a $C^*$-dynamical system such that $S_G(A)\not=\emptyset$.
\begin{enumerate}
\item [(1)] If $\f\in S_G(A)\setminus \erg_G(A)$, then there exists $0\not=\xi\in H_\f^0$ such that $u_\f(g)\xi=\xi$ for every $g\in G$.
\item [(2)] For $\f\in S_G(A)$, one has $\f\in S_{G,m}(A)$ if and only if $u_\f^0$ is a $C_0$ representation, i.e. for all $\xi,\eta\in H_\f^0$, the associated coefficient function $g\mapsto \la u_\f^0(g)\xi|\eta\ra$ belongs to $C_0(G)$.
\item [(3)] One has $S_{G,m}(A)\subset \erg_G(A)$.
\item [(4)] If $(B,G,\beta)$ is a $C^*$-dynamical system such that there exists a unital, $G$-equivariant $*$-homomorphism $\theta:B\rightarrow A$, then $\theta_*(S_{G,m}(A))\subset S_{G,m}(B)$, where $\theta_*$ is the dual map $\theta_*(\psi)=\psi\circ\theta$ for every $\psi\in A^*$.
\end{enumerate}
\end{proposition}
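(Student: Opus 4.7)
For part (1), the plan is to invoke Proposition \ref{ergCstar}: since $\f\notin \erg_G(A)$, the commutant $\pi_\f(A)'\cap u_\f(G)'$ strictly contains $\C 1$, so pick a non-scalar $T$ in it. The natural candidate for a non-zero $G$-fixed vector in $H_\f^0$ is $\eta:=T\xi_\f-\la T\xi_\f|\xi_\f\ra\xi_\f$: by construction $\eta\perp\xi_\f$, and since $T$ commutes with every $u_\f(g)$ and $\xi_\f$ is $G$-fixed, so is $\eta$. The sole point to verify is $\eta\neq 0$; equivalently $T\xi_\f\notin \C\xi_\f$. If $T\xi_\f=c\xi_\f$, then for every $a\in A$, $T\pi_\f(a)\xi_\f=\pi_\f(a)T\xi_\f=c\pi_\f(a)\xi_\f$, and cyclicity of $\xi_\f$ for $\pi_\f(A)$ forces $T=c\,\id$, contradicting the choice of $T$.

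For part (2), I would unpack the mixing condition directly through the GNS data. Using the covariance relation, write
\[
\f(\alpha_g(a)b)=\la u_\f(g)\pi_\f(a)u_\f(g)^*\pi_\f(b)\xi_\f|\xi_\f\ra=\la u_\f(g)^*\pi_\f(b)\xi_\f|\pi_\f(a^*)\xi_\f\ra,
\]
where the second equality uses $u_\f(g)^*\xi_\f=\xi_\f$. Orthogonally decompose $\pi_\f(b)\xi_\f=\f(b)\xi_\f+\eta_b$ and $\pi_\f(a^*)\xi_\f=\ol{\f(a)}\xi_\f+\eta_{a^*}$, with $\eta_b,\eta_{a^*}\in H_\f^0$. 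Expanding the inner product, the two cross terms vanish because $u_\f(g)$ fixes $\xi_\f$ and $\eta_b,\eta_{a^*}\perp \xi_\f$; what remains is exactly
\[
\f(\alpha_g(a)b)=\f(a)\f(b)+\la u_\f^0(g)^*\eta_b|\eta_{a^*}\ra.
\]
Hence mixing of $\f$ is equivalent to $\la u_\f^0(g)^*\eta_b|\eta_{a^*}\ra\xrightarrow{g\to\infty}0$ for all $a,b\in A$. Cyclicity of $\xi_\f$ implies that $\{\eta_b:b\in A\}$ is total in $H_\f^0$, so by a standard $\ep/3$ argument combined with uniform boundedness of $u_\f^0(g)^*$, this is equivalent to $u_\f^0$ being a $C_0$-representation.

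Part (3) follows immediately by combining (1) and (2): if $\f\in S_{G,m}(A)$, then $u_\f^0$ is $C_0$. Any non-zero $G$-fixed vector $\xi\in H_\f^0$ would yield the constant coefficient $\la u_\f^0(g)\xi|\xi\ra=\Vert\xi\Vert^2$, which cannot belong to $C_0(G)$ since $G$ is non-compact. Thus no such $\xi$ exists, and the contrapositive of (1) forces $\f\in\erg_G(A)$.

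Part (4) is a direct verification. The pull-back $\theta_*(\f)=\f\circ\theta$ is a state on $B$ (unitality of $\theta$ gives $\theta_*(\f)(1)=1$), and $G$-invariance follows from $\theta\circ\beta_g=\alpha_g\circ\theta$. For mixing, using equivariance,
\[
\theta_*(\f)(\beta_g(x)y)=\f(\alpha_g(\theta(x))\theta(y))\xrightarrow{g\to\infty}\f(\theta(x))\f(\theta(y))=\theta_*(\f)(x)\theta_*(\f)(y).
\]
The only genuinely technical step is the identity in (2), in particular ensuring that the cross terms really vanish; once this formula is established, (1), (3), (4) are essentially bookkeeping.
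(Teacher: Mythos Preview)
Your proof is correct and follows essentially the same route as the paper's. The only notable variation is in part (1): the paper picks a nontrivial \emph{projection} $p\in\pi_\f(A)'\cap u_\f(G)'$ and then has to rule out $p\xi_\f\in\C\xi_\f$ using that $p^2=p$, whereas you take an arbitrary non-scalar $T$ and dispatch $T\xi_\f\in\C\xi_\f$ directly via cyclicity; your version is marginally cleaner. Parts (2)--(4) are the same computations as in the paper, just with the orthogonal decomposition of $\pi_\f(a)\xi_\f$ made explicit.
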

\begin{proof}
(1) By Proposition \ref{ergCstar}, there exists a projection $p\in \pi_\f(A)'\cap u_\f(G)'$ such that $p\not=0,1$. We claim then that $0\not=p\xi_\f\not=\xi_\f$. Indeed, for instance if $p\xi_\f=0$, then we would have $p\pi_\f(x)\xi_\f=\pi_\f(x)p\xi_\f=0$ for every $x\in A$, implying that $p=0$ by cyclicity of $\xi_\f$. Then set $\xi=p\xi_\f-\la \xi|\xi_\f\ra\xi_\f$, which is orthogonal to $\xi_\f$ and $u_\f^0(G)$-invariant. We claim that $\xi\not=0$. Otherwise, we would have
$p\xi_\f=\la \xi|\xi_\f\ra\xi_\f=\Vert p\xi_\f\Vert^2\xi_\f$ hence
\[
\frac{p}{\Vert p\xi_\f\Vert^2}\pi_\f(x)\xi_\f=\pi_\f(x)\xi_\f\quad (x\in A)
\]
and $p=\Vert p\xi_\f\Vert^2 1$ but, as $p^2=p$, this would imply that $\Vert p\xi_\f\Vert=0$ or $1$, which is impossible by the above observations.\\
(2) Set $E\coloneqq \{\pi_\f(x)\xi_\f-\f(x)\xi_\f\colon x\in A\}$. Then it is a dense subspace of $H_\f^0$, as is easily verified. Next, we have for all $a,b\in A$
\[
\la u_\f^0(g)[\pi_\f(a)\xi_\f-\f(a)\xi_\f]|\pi_\f(b)\xi_\f-\f(b)\xi_\f\ra=
\f(b^*\alpha_g (a))-\f(b^*)\f(a)\to 0
\]
as ${g\to\infty}$ if and only if $\f\in S_{G,m}(A)$, and this proves the second assertion of the proposition.\\
(3) Applying (1), if there existed $\f \in S_{G,m}(A)$ such that $\f\notin \erg_G(A)$, there would exist $\xi\not=0$ such that $\xi\perp\xi_\f$ and $u_\f(g)\xi=\xi$ pour tout $g\in G$. But then
\[
\Vert \xi\Vert^2=\lim_{g\to\infty} \la u_\f(g)\xi|\xi\ra=0
\]
which is a contradiction.\\
(4) One has for all $\f\in S_{G,m}(A)$ and $x,y\in B$:
\begin{multline*}
\theta_*(\f)(\beta_g(x)y)=\f(\theta(\beta_g(x))\theta(y))=\f(\alpha_g(\theta(x))\theta(y))\\
\to_{g\to\infty} \f(\theta(x))\f(\theta(y))=\theta_*(\f)(x)\theta_*(\f)(y).
\end{multline*}
This shows that $\theta_*(\f)\in S_{G,m}(B)$.
\end{proof}

\bigskip
Proofs of some results in \cite{GW} require the notion of infinite product of probability spaces. Here, these are replaced by infinite (minimal) tensor products of unital $C^*$-algebras, as presented in \cite[Section XIV.1]{Tak3}.

Let $A$ be a separable, unital $C^*$-algebra and $\phi\coloneqq (\f_n)_{n\geq 1}\subset S(A)$. We denote by $B=\bigotimes_{n\geq 1} A_n$ the infinite tensor product $C^*$-algebra where $A_n=A$ for $n$. It is the inductive limit $C^*$-algebra of the sequence $B_1\subset B_2\subset\cdots \subset B_n\cdots$ where $B_1=A$, $B_2=A\otimes_{\min} A,\ldots$ and where $B_n$ identifies to $B_n\otimes 1$ in $B_{n+1}$ for every $n$. One associates to $\phi$ the following state on $B$ denoted by $\underset{{n\geq 1}}{\otimes}\f_n$ and defined by
\[
\underset{{n\geq 1}}{\otimes}\f_n(x_1\otimes\cdots \otimes x_n\otimes 1\otimes \cdots)=\f_1(x_1)\cdots \f_n(x_n)
\]
for all $n\geq 1$ and $x_1,\ldots,x_n\in A$.

If $(A,G,\alpha)$ is a $C^*$-dynamical system, then so is $(B,G,\beta)$, where $\beta=\underset{{n\geq 1}}{\otimes}\alpha$ is the action of $G$ defined by
\[
\beta_g(x_1\otimes\cdots \otimes x_n\otimes 1\otimes \cdots)=\alpha_g(x_1)\otimes \cdots\otimes \alpha_g(x_n)\otimes\cdots
\]
for all $g\in G$, $n\geq 1$ and $x_1,\ldots,x_n\in A$. We remark that if $\phi\subset S_G(A)$ then $\underset{n\geq 1}{\otimes}\f_n\in S_G(B)$.

Finally, let us denote by $J_G(B,\phi)$ the set of all states (\textit{joinings}) $\f\in S_G(B)$ such that one has for all $n\geq 1$ and $a\in A$:
\[
\f(1\otimes\cdots\otimes \underset{n}{a}\otimes 1\otimes\cdots)=\f_n(a)
\]
where $\underset{n}{a}$ means that $a$ is in the $n$th position in the tensor product. 

It is clear that $J_G(B,\phi)$ is convex and compact, and it is non-empty since $\underset{n\geq 1}{\otimes}\f_n\in J_G(B,\phi)$.

\begin{lemma}\label{lemJ}
Let us keep notations and definitions above.
\begin{enumerate}
\item [(1)] If $\phi\subset \erg_G(A)$ and if $\f\in J_G(B,\phi)$ is extremal in $J_G(B,\phi)$, then it belongs to $\erg_G(B)$.
\item [(2)] If $\phi\subset S_{G,m}(A)$ then $\underset{n\geq 1}{\otimes}\f_n\in S_{G,m}(B)$.
\end{enumerate}
\end{lemma}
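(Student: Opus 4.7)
For part (1), I plan to use the standard restriction-to-marginals argument. Suppose $\f\in J_G(B,\phi)$ is extremal in $J_G(B,\phi)$, and assume we have a decomposition $\f=t\psi_1+(1-t)\psi_2$ with $0<t<1$ and $\psi_1,\psi_2\in S_G(B)$. Fixing $n\geq 1$, identify $A$ with the subalgebra $A_n=1\otimes\cdots\otimes 1\otimes A\otimes 1\otimes\cdots\subset B$ (with $A$ in position $n$). The restrictions $\psi_i|_{A_n}$ belong to $S_G(A)$, and $\f|_{A_n}=\f_n=t\psi_1|_{A_n}+(1-t)\psi_2|_{A_n}$. Since $\f_n\in\erg_G(A)$ is an extreme point of $S_G(A)$, we conclude $\psi_1|_{A_n}=\psi_2|_{A_n}=\f_n$ for every $n$. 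Hence $\psi_1,\psi_2\in J_G(B,\phi)$, and extremality of $\f$ in $J_G(B,\phi)$ forces $\psi_1=\psi_2=\f$. Therefore $\f\in\erg_G(B)$.

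For part (2), set $\f\coloneqq\underset{n\geq 1}{\otimes}\f_n$ and $\beta\coloneqq\underset{n\geq 1}{\otimes}\alpha$. The idea is to check the mixing property first on elementary tensors and then pass to the full $C^*$-algebra $B$ by density. For $x=x_1\otimes\cdots\otimes x_N\otimes 1\otimes\cdots$ and $y=y_1\otimes\cdots\otimes y_N\otimes 1\otimes\cdots$ (padded with $1$'s to a common length $N$), we have
\[
\f(\beta_g(x)y)=\prod_{i=1}^N\f_i(\alpha_g(x_i)y_i)\xrightarrow[g\to\infty]{}\prod_{i=1}^N\f_i(x_i)\f_i(y_i)=\f(x)\f(y),
\]
since each factor converges by the mixing hypothesis on $\f_i$, and products of finitely many convergent bounded sequences converge. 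By linearity this extends to the algebraic tensor product $\bigodot_{n\geq 1}A_n$, which is norm-dense in $B$.

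To finish, I would run a standard $\ep/3$-approximation: given $x,y\in B$ and $\ep>0$, choose $x',y'$ in the algebraic tensor product with $\|x-x'\|,\|y-y'\|<\ep$. Since $\f$ and every $\beta_g$ are contractive, the triangle inequality gives
\[
\bigl|\f(\beta_g(x)y)-\f(\beta_g(x')y')\bigr|\leq \ep(\|y\|+\|x'\|),
\]
and similarly $|\f(x)\f(y)-\f(x')\f(y')|\leq \ep(\|y\|+\|x'\|)$. Combined with the mixing property for $(x',y')$, this yields $\limsup_{g\to\infty}|\f(\beta_g(x)y)-\f(x)\f(y)|\leq 2\ep(\|y\|+\|x'\|)$, and letting $\ep\to 0$ proves $\f\in S_{G,m}(B)$.

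The only mildly delicate point is in part (1), namely the extremality of the marginal $\f_n$ inside $S_G(A)$ being enough to pin down the marginals of both $\psi_1$ and $\psi_2$; beyond that, both assertions follow from routine density and extremality arguments and I do not expect any real obstacle.
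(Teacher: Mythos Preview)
Your proof is correct and follows essentially the same approach as the paper's: for (1), the paper also decomposes $\f$ as a convex combination in $S_G(B)$, uses extremality of each marginal $\f_n$ in $S_G(A)$ to force both summands into $J_G(B,\phi)$, and concludes by extremality there; for (2), the paper likewise verifies mixing on elementary tensors and invokes density, with your explicit $\ep/3$ argument simply spelling out what the paper leaves as ``by density''.
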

\begin{proof}
(1) Let us write $\f=t\omega+(1-t)\psi$ with $\omega,\psi\in S_G(B)$. Then
\begin{multline*}
\f(1\otimes\cdots\otimes \underset{n}{a}\otimes 1\otimes\cdots)=\f_n(a)=\\
t\omega(1\otimes\cdots\otimes \underset{n}{a}\otimes 1\otimes\cdots)+
(1-t)\psi(1\otimes\cdots\otimes \underset{n}{a}\otimes 1\otimes\cdots)
\end{multline*}
for every $n$ and every $a\in A$, hence, since each $\f_n\in \erg_G(A)$, one has 
\[
\omega(1\otimes\cdots\otimes \underset{n}{a}\otimes 1\otimes\cdots)
=\psi(1\otimes\cdots\otimes \underset{n}{a}\otimes 1\otimes\cdots)=\f_n(a)
\]
for all $n$ and $a$, thus $\omega,\psi\in J_G(B,\phi)$, and as $\f$ is extremal in $J_G(B,\phi)$, we get $\omega=\psi=\f$.\\
(2) If $\f_n\in S_{G,m}(A)$ for every $n$, it is straightforward to check that, for every $n$ and for all $x_1,\ldots,x_n,y_1,\ldots,y_n\in A$ one has
\[
\lim_{g\to\infty}\f_1(\alpha_g(x_1)y_1)\cdots \f_n(\alpha_g(x_n)y_n)=\f_1(x_1)\cdots \f_n(x_n)\f_1(y_1)\cdots \f_n(y_n).
\]
By density, $\underset{n\geq 1}{\otimes}\f_n\in S_{G,m}(B)$.
\end{proof}

\subsection{The compact metrizable space $\Sigma$}

In order to prove statement (2) of Theorem \ref{HAP1}, we need to recall some properties of the space $\Sigma$ as it appears in the latter theorem. We feel necessary to give more details than in \cite{GW} on the dynamical system $(\Sigma,G)$. 

Let $G^+=G\cup\{\infty\}$ denote the one-point compactification of the locally compact, second countable group $G$, and let $\Sigma$ be the set of all closed subsets of $G^+$ containing $\infty$. Then it is easy to verify that, if $\infty\in Y\subset G^+$, then $Y\in \Sigma$ if and only if $Y\setminus\{\infty\}$ is a closed subset of $G$. The \textit{Hausdorff topology} (also called the \textit{Chabauty topology}) on $\Sigma$ is generated by the sets of the form $U(C;V_1,\ldots,V_n)$ where $C\subset G^+$ is compact, $V_1,\ldots,V_n\subset G^+$ are open, and where
\[
U(C;V_1,\ldots,V_n)\coloneqq \{Y\in \Sigma\colon Y\cap C=\emptyset,\ Y\cap V_j\not=\emptyset\ \forall j=1,\ldots,n\}.
\]
It turns out that $\Sigma$ is a compact, metrizable space on which $G$ acts by left translations, with the convention that $g\,\infty=\infty$ for every $g\in G$.
Thus, $(\Sigma,G)$ is a dynamical system. For future use, if $S\subset G$ is any set, we put $S^+\coloneqq S\cup\{\infty\}$.

Consider now a dynamical system $(X,G)$ such that $M_{G,m}(X)\not=\emptyset$. For every $\mu\in M_{G,m}(X)$ and every closed, non empty set $C\subset X$, the map $\theta_C:X\rightarrow \Sigma$ defined by
\[
\theta_C(x)=\{g\in G\colon g^{-1}x\in C\}^+ \quad (x\in X)
\]
is easily seen to be $G$-equivariant and continuous. Hence, using Proposition \ref{ergCompl} (4), we see that the associated measure $\theta_C(\mu)$ on $\Sigma$ defined by $\mu\circ \theta_C^{-1}$ belongs to $M_{G,m}(\Sigma)$. Moreover, it is easy to check that 
\[
\theta_C(\mu)(\{e\}^+)=\mu(C).
\]
This implies that, if $C_1,C_2$ are non empty closed subsets of $X$ such that $\mu(C_1)\not=\mu(C_2)$, then $\theta_{C_1}(\mu)\not=\theta_{C_2}(\mu)$.

\bigskip
We will present a detailed proof of statement (2) of Theorem \ref{HAP1} even if it is very similar to the ones of \cite[Proposition 2' and Theorem 2']{GW}; it rests on Lemmas 1 and 2 in \cite{GW} that we recall now.

In order to do that, and for the rest of the article, we fix an increasing sequence of symmetrizable, compact sets $K_n=K_n^{-1}\nearrow G$ where $e\in \mathring{K}_1$ and $K_n\subset \mathring{K}_{n+1}$ for every $n$.

\begin{lemma}\label{lemma1}
\cite[Lemma 1]{GW}
Let $f\in C(\Sigma)$; then
\begin{enumerate}
\item [(1)] For every $E\in \Sigma$,
\[
f(E)=\lim_n f(E\cap K_n^+).
\]
\item [(2)] Given $\ep>0$, there exists $n\geq 1$ such that for all $E,E'\in \Sigma$ with $E\cap K_n=E'\cap K_n$, one has $|f(E)-f(E')|<\ep$.
\end{enumerate}
\end{lemma}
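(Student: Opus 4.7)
The plan is to reduce both statements to convergence properties of the Chabauty topology on $\Sigma$ combined with continuity of $f$ on the compact metrizable space $\Sigma$.

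For part (1), it suffices to show that $E\cap K_n^+\to E$ in $\Sigma$ and appeal to continuity of $f$. Fix a basic open neighborhood $U(C;V_1,\ldots,V_k)$ of $E$, so that $E\cap C=\emptyset$ and $E\cap V_j\ne\emptyset$ for each $j$. The disjointness condition is inherited automatically, since $(E\cap K_n^+)\cap C\subset E\cap C=\emptyset$. For each $j$ pick $x_j\in E\cap V_j$: if $x_j=\infty$ then $x_j\in K_n^+$ for every $n$, while if $x_j\in G$ then $x_j\in K_n$ eventually, using $K_n\nearrow G$. Taking $n$ past finitely many thresholds puts $E\cap K_n^+$ inside $U(C;V_1,\ldots,V_k)$.

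For part (2), I would argue by contradiction. If the statement fails, there exist $\ep>0$ and sequences $E_n,E_n'\in\Sigma$ with $E_n\cap K_n=E_n'\cap K_n$ but $|f(E_n)-f(E_n')|\geq \ep$. By compactness of $\Sigma$, pass to a subsequence along which $E_n\to E$ and $E_n'\to E'$; continuity of $f$ then contradicts the assumption provided $E=E'$. To obtain $E\subset E'$, take $x\in E$; if $x=\infty$ it lies in $E'$ trivially, so assume $x\in G$ and fix $m$ with $x\in\mathring{K}_m$. Convergence $E_n\to E$ (applied to basic opens of the form $U(\emptyset;V)$) forces every open neighborhood $V$ of $x$ to eventually intersect $E_n$. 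Restricting to $V\subset\mathring{K}_m$ and $n\geq m$, we get $V\cap E_n=V\cap(E_n\cap K_n)=V\cap(E_n'\cap K_n)\ne\emptyset$; since any neighborhood of $x$ contains such a $V$ (intersect it with $\mathring{K}_m$), this places $x$ in the Kuratowski lower limit of $(E_n')$, namely $E'$. Symmetry yields $E=E'$.

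The main subtlety is the transport of local witnesses between $E_n$ and $E_n'$: one must locate points of $E_n$ close to $x\in E\cap G$ that are already constrained to lie in $K_n$, so that the hypothesis $E_n\cap K_n=E_n'\cap K_n$ can actually be applied. Choosing $m$ with $x\in\mathring{K}_m$ and exploiting $K_m\subset K_n$ for $n\geq m$ is precisely what makes such witnesses available from some index on, and it is also the only place where the nested structure of the sequence $(K_n)$ is used in an essential way.
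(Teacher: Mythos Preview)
The paper does not prove this lemma; it merely records the statement and cites \cite[Lemma 1]{GW}. Your argument is correct: part~(1) is a direct verification that $E\cap K_n^+\to E$ in the Chabauty topology, and part~(2) is the natural compactness/contradiction argument, using that Chabauty convergence on closed subsets of the compact metrizable space $G^+$ coincides with Kuratowski convergence so that the subsequential limits must agree. This is essentially the standard proof one would expect.
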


\begin{lemma}\label{lemma2}
\cite[Lemma 2]{GW}
Let $G$ be a locally compact, second countable group, let $(X,G)$ be a dynamical system which admits a measure $\mu\in M_G(X)$ with the following property: There exists a sequence $(A_n)_{n\geq 1}$ of Borel subsets of $X$ such that $\mu(A_n)=1/2$ for every $n$ and for every $g\in G$
\[
\lim_{n\to\infty}\mu(gA_n\bigtriangleup A_n)=0.
\]
Then there exist two sequences $(B_n)$ and $(B_n')$ of Borel subsets of $X$ such that $B_n\cap B_n'=\emptyset$ for every $n$, $\lim_n\mu(B_n)=\lim_n\mu(B_n')=1/2$ and, for every compact set $K\subset G$,
\[
\lim_n \mu\Big(\bigcap_{k\in K} k^{-1}B_n\Big)=1/2.
\]
Furthermore, for all $x\in X$ and $n\geq 1$, the two sets
\begin{equation*}
F(x,n)\coloneqq \{g\in G\colon g^{-1}x\in B_n\} \quad \textrm{and}\quad
F'(x,n)\coloneqq \{g\in G\colon g^{-1}x\in B_n'\}
\end{equation*}
are closed subsets of $G$.
\end{lemma}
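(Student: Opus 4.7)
My plan is to follow the three-stage strategy originally due to Glasner and Weiss.

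\emph{Stage 1 (Regularization).} By inner regularity of the Borel probability measure $\mu$ on the compact metrizable space $X$, I would choose closed sets $\tilde A_n\subset A_n$ and $\tilde A_n'\subset X\setminus A_n$ with $\mu(A_n\setminus \tilde A_n)+\mu((X\setminus A_n)\setminus \tilde A_n')<1/n$. These sets are automatically disjoint, have measures tending to $1/2$, and inherit almost-invariance:
$$\mu(g\tilde A_n\bigtriangleup \tilde A_n)\leq \mu(gA_n\bigtriangleup A_n)+2\mu(A_n\setminus\tilde A_n)\to 0\qquad(g\in G).$$
Since $\tilde A_n$ is closed and $g\mapsto g^{-1}x$ is continuous, the preimage $\{g: g^{-1}x\in \tilde A_n\}$ is automatically closed; this is what ultimately yields the closedness of $F(x,n)$ and $F'(x,n)$.

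\emph{Stage 2 (Uniform almost-invariance).} I would next upgrade the pointwise almost-invariance to uniform almost-invariance on each $K_m$. The strong continuity of translation on $L^2(X,\mu)$ makes $g\mapsto \mu(g\tilde A_n\bigtriangleup \tilde A_n)$ continuous on $G$, so using second countability of $G$ and a diagonal extraction over a dense countable subset, supplemented by an $\ep$-net compactness argument on each $K_m$ that exploits the equicontinuity of translations of bounded $L^2$-vectors, one can pass to a subsequence $\tilde A_{n(m)}$ with $\sup_{g\in K_m}\mu(g\tilde A_{n(m)}\bigtriangleup \tilde A_{n(m)})\to 0$ sufficiently fast that the product with the size of an $\ep$-net for any fixed $K_M$ still tends to $0$.

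\emph{Stage 3 (Construction).} Finally I would set
$$B_m=\bigcap_{k\in K_m}k^{-1}\tilde A_{n(m)},\qquad B_m'=\bigcap_{k\in K_m}k^{-1}\tilde A_{n(m)}'.$$
These sets are disjoint and closed, so $F(x,m)$ and $F'(x,m)$ are closed as preimages of closed sets under continuous $g\mapsto g^{-1}x$. Closedness of $\tilde A_{n(m)}$ lets the continuum intersection be computed as the intersection over any countable dense subset of $K_m$; the elementary bound $\mu\bigl(\tilde A\setminus \bigcap_i k_i^{-1}\tilde A\bigr)\leq \sum_i\mu(\tilde A\bigtriangleup k_i^{-1}\tilde A)$ combined with stage-two uniform almost-invariance then yields $\mu(B_m),\mu(B_m')\to 1/2$. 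For any compact $K\subset G$, the identity $\bigcap_{k\in K}k^{-1}B_m=\bigcap_{h\in K_m K}h^{-1}\tilde A_{n(m)}$, together with $K_m K\subset K_{m'}$ for a suitable $m'=m'(K,m)$ and uniform almost-invariance on $K_{m'}$, forces the measure to tend to $1/2$ by the same argument.

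The main obstacle is Stage 2: pointwise convergence of the continuous functions $g\mapsto \mu(g\tilde A_n\bigtriangleup\tilde A_n)$ does not a priori imply uniform convergence on compacts. One must extract uniformity from the equicontinuity coming from strong continuity of the translation representation on $L^2(X,\mu)$ via a careful diagonal argument, and then quantify the uniform almost-invariance strongly enough (as a function of the diameter of $K_m$ and of the mesh of the $\ep$-nets used in Stage 3) to make the subsequent finite-intersection estimates tight.
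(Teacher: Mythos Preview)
The paper does not prove this lemma: it is quoted from Glasner--Weiss and used as a black box in Section~3. So there is no argument in the paper to compare your sketch against, and your sketch must stand on its own.

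There is a genuine gap. In Stage~2 you appeal to ``the equicontinuity of translations of bounded $L^2$-vectors'' to pass from pointwise to uniform convergence on compacta. This equicontinuity fails: strong continuity of $g\mapsto u_g$ on $L^2(X,\mu)$ yields continuity of $g\mapsto u_g\xi$ for each fixed $\xi$, but the modulus of continuity depends on $\xi$; equicontinuity over the unit ball would force \emph{norm}-continuity of the representation, which the Koopman representation of a non-discrete group never enjoys. Indicators of closed sets with increasingly irregular boundary show that the family $\{g\mapsto \mu(g\tilde A_n\bigtriangleup\tilde A_n)\}_n$ can be as far from equicontinuous as one likes, so your diagonal/$\varepsilon$-net extraction does not produce the claimed subsequence.

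The defect is not cosmetic, because Stage~3 needs precisely what Stage~2 cannot deliver. You correctly reduce $\bigcap_{k\in K_m}k^{-1}\tilde A_{n(m)}$ to an intersection over a countable dense $\{k_i\}\subset K_m$ via closedness of $\tilde A_{n(m)}$, and then invoke $\mu\bigl(\tilde A\setminus\bigcap_i k_i^{-1}\tilde A\bigr)\le\sum_i\mu(\tilde A\bigtriangleup k_i^{-1}\tilde A)$. But this sum has infinitely many terms, so a uniform bound on the summands is vacuous; and replacing $\{k_i\}$ by a finite $\varepsilon$-net destroys the intersection identity you just used. The Glasner--Weiss argument bypasses both problems by exploiting the uniform continuity of the \emph{action map} $K_m\times X\to X$ --- a feature of the dynamical system alone, independent of $n$ --- rather than $L^2$-continuity of the individual indicators; this is what lets a single finite subset of $K_m$ control the full intersection once the closed sets are suitably adjusted.
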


\section{Proof of Theorem \ref{HAP1}}

Let $G$ be a locally compact, second countable group. 

\bigskip\noindent
(1) Let us assume that there is a $C^*$-dynamical system $(A,G,\alpha)$ for which there exists a $G$-invariant, non-ergodic state $\f$ on $A$ and a sequence $(\f_n)_{n\geq 1}\subset S_{G,m}(A)$ such that $\f=w^*-\lim_n \f_n$. Recall that $P_{0,1}(G)$ is the set of all continuous, normalized, positive definite functions on $G$ which belong to $C_0(G)$. Let us fix a number $\ep>0$ and a compact set $K\subset G$. 
We have to prove the existence of some $\rho\in P_{0,1}(G)$ such that
\begin{equation}\label{inegHAP}
    \sup_{g\in K}|\rho(g)-1|\leq \ep.
\end{equation}
By Proposition \ref{ergCompl}, as $\f$ is not ergodic, there exists $\xi\in H_\f^0$, $\Vert \xi\Vert=1$, such that $u_\f^0(g)\xi=\xi$ for every $g\in G$. There also exists $a\in A$ such that $\f(a)=0$, $\Vert\pi_\f(a)\xi_\f-\xi\Vert\leq \ep/4$ and $\Vert \pi_\f(a)\xi_\f\Vert=1$. 
One has
\begin{multline*}
\sup_{g\in G}|\f(a^*\alpha_g(a))-1|=
\sup_{g\in G}|\la u_\f(g)\pi_\f(a)\xi_\f|\pi_\f(a)\xi_\f\ra-1|\\
\leq 
\sup_{g\in G}|\la u_\f(g)(\pi_\f(a)\xi_\f-\xi)|\pi_\f(a)\xi_\f\ra|+ |\la \xi|\pi_\f(a)\xi_\f-\xi\ra|
\leq \frac{\ep}{2}.
\end{multline*}
Set $B=\bigotimes_{n\geq 1}A_n$ with $A_n=A$ for every $n$, and $\phi=\underset{n\geq 1}{\otimes}\f_n$. By Lemma \ref{lemJ}, $\phi\in S_{G,m}(B)$. Hence in particular, $u_\phi^0$ is a $C_0$ representation. For every $n\geq 1$, let $a_n=1\otimes\cdots\otimes\underset{n}{a}\otimes 1\otimes\cdots$ so that
\[
\psi_n(g)\coloneqq
\la u_\phi(g)\pi_\phi(a_n)\xi_\phi|\pi_\phi(a_n)\xi_\phi\ra=\f_n(a^*\alpha_g(a))\to_{n\to\infty}\f(a^*\alpha_g(a))\eqqcolon \psi(g)
\]
for every $g\in G$. Then $\psi_n\in P_{0,1}(G)$, and by Lebesgue Dominated Convergence Theorem, 
\[
\lim_{n\to\infty}\int_Gh(g)\psi_n(g)dg=\int_Gh(g)\psi(g)dg
\]
for every $h\in L^1(G)$, and by \cite[Theorem 13.5.2]{DiC}, the sequence $(\psi_n)_{n\geq 1}$ converges to $\psi$ uniformly on compact subsets of $G$. Thus, there exists $N>0$ such that, for every $n\geq N$, one has
\[
\sup_{g\in K}|\psi_n(g)-\psi(g)|\leq \frac{\ep}{2}.
\]
Finally, for every $n\geq N$, one has 
\[
\sup_{g\in K}|\psi_n(g)-1|\leq \sup_{g\in K}|\psi_n(g)-\psi(g)|+\sup_{g\in G}|\f(a^*\alpha_g(a))-1|\leq \ep.
\]
Setting $\rho=\psi_n$ for some $n\geq N$, we get (\ref{inegHAP}), and $G$ has the Haagerup property.

\bigskip\noindent
(2) Assume next that $G$ has the latter property. By \cite[Theorem 2.2.2]{ccjjv} and by \cite[Lemma 1.3]{AEG}, there exists a dynamical system $(X,G)$ equipped with a probability measure $\mu\in M_{G,m}(X)$, whose support is $X$, and a sequence $(A_n)_{n\geq 1}$ of Borel subsets of $X$ such that $\mu(A_n)=1/2$ for every $n$ and, for every compact set $K\subset G$, 
\[
\lim_{n\to\infty}\sup_{g\in K}\mu(gA_n\bigtriangleup A_n)=0.
\]
By Lemma \ref{lemma2}, there exists two sequences $(B_n)_{n\geq 1}$ and $(B_n')_{n\geq 1}$ of Borel subsets of $X$ such that $B_n\cap B_n'=\emptyset$ for every $n$,
\[
\lim_n \mu(B_n)=\lim_n\mu(B_n')=1/2,
\]
and 
\[
\lim_n \mu\Big(\bigcap_{k\in K}k^{-1}B_n\Big)=1/2
\]
for every compact set $K\subset G$. Moreover, for all $x\in X$ and $n\geq 1$, the two sets 
\begin{equation}\label{F(x,n)}
F(x,n)\coloneqq \{g\in G\colon g^{-1}x\in B_n\} \quad \textrm{and}\quad
F'(x,n)\coloneqq \{g\in G\colon g^{-1}x\in B_n'\}
\end{equation}
are closed subsets of $G$.

In order to prove that $M_{G,m}(\Sigma)$ contains more than one element, choose two non empty closed sets $C_1,C_2\subset X$ such that $\mu(C_1)\not=\mu(C_2)$; then the two probability measures $\theta_{C_1}(\mu)$ and $\theta_{C_2}(\mu)$ as defined in Section 3 belong to $S_{G,m}(\Sigma)$ and are distinct. 

Now, let 
$\nu_1,\nu_2\in M_{G,m}(\Sigma)$; then $\sigma\coloneqq \mu\times\nu_1\times \nu_2$ is still $G$-invariant and mixing on $X\times\Sigma\times\Sigma$; define finally $\displaystyle{\nu\coloneqq\frac{1}{2}(\nu_1+\nu_2)}$. Part (2) will be proved if we show that $\nu$ is a weak$^*$ limit of a sequence in $M_{G,m}(\Sigma)$.

In fact, for the reader's convenience, we will give a detailed proof of the existence of a sequence $(\eta_n)_{n\geq 1}\subset M_{G,m}(\Sigma)$ that converges weak$^*$ to $\nu$. Our proof is very similar to that of \cite[Proposition 2]{GW}.

Put $[e]=\{E\in \Sigma\colon e\in E\}$ where $e$ denotes the identity element of $G$, and, for every $n\geq 1$,
\[
D_n=(B_n\times [e]\times \Sigma)\cup (B_n'\times \Sigma\times [e]).
\]
(We observe that the two subsets are disjoint because $B_n$ and $B_n'$ are.) Define next $\theta_n:X\times \Sigma\times\Sigma\rightarrow \Sigma$ by
\[
\theta_n(x,E_1,E_2)=\{g\colon g^{-1}(x,E_1,E_2)\in D_n\}^+.
\]
For instance, as $g^{-1}(x,E_1,E_2)\in (B_n\times [e]\times\Sigma)$ if and only if $g^{-1}x\in B_n$ and $g\in E_1$, we have in fact:
\[
\theta_n(x,E_1,E_2)=(F(x,n)\cap E_1)^+\cup(F'(x,n)\cap E_2)^+
\]
for all $(x,E_1,E_2)\in X\times \Sigma\times \Sigma$ and $n\geq 1$, where $F(x,n)$ and $F'(x,n)$ are the closed subsets of $G$ as defined in Lemma \ref{lemma2}.

Then we define $\eta_n\in M_{G,m}(\Sigma)$ by $\eta_n=\theta_n(\sigma)$ for every $n$. 

Let $f\in C(\Sigma)$ and $\ep>0$; then $\int fd\eta_n=\int (f\circ\theta_n)d\sigma$ for every $n$, and
we have to show that there exists $N>0$ such that, for every $n>N$ we have
\begin{equation}\label{inegalite}
\Big|\int (f\circ\theta_n)d\sigma-\frac{1}{2}\Big(\int fd\nu_1+\int fd\nu_2\Big)\Big|\leq \ep.    
\end{equation}
Without loss of generality, we assume moreover that $|f(E)|\leq 1$ for every $E\in \Sigma$, and we set $\delta=\ep/7$.

\bigskip

We divide the rest of the proof into 3 steps.

\bigskip\noindent
\textit{Step 1} Lemma \ref{lemma1} implies the existence of $m>0$ such that, for all $E,E'\in \Sigma$ such that $E\cap K_m=E'\cap K_m$ one has $|f(E)-f(E')|\leq\delta$. We observe then that, as $(K_m^+\cap E)\cap K_m=E\cap K_m$ for every $E\in \Sigma$, one has $|f(K_m^+\cap E)-f(E)|\leq\delta$ for every $E$. In particular,
\[
\Big|\int f(K_m^+\cap E)d\nu_j-\int f(E)d\nu_j\Big|\leq \delta
\] 
for $j=1,2$.\\
\textit{Step 2} Set for every $n>m$ : 
\[
\bb=\bigcap_{g\in K_m}gB_n\quad \textrm{and}\quad \bb'=\bigcap_{g\in K_m}gB_n'.
\]
Then Lemma \ref{lemma2} shows that there exists $N>m$ such that 
\[
|\mu(\bb)-1/2|+|\mu(\bb')-1/2|\leq \delta
\]
for every $n\geq N$. Moreover, we have by construction for every $n>N$ and every $x\in \bb$: 
\[
K_m\cap F(x,n)=K_m \quad \textrm{and}\quad K_m\cap F'(x,n)=\emptyset.
\]
Similarly, for every $n>N$ and every $x\in \bb'$, we have
\[
K_m\cap F'(x,n)=K_m \quad \textrm{and}\quad K_m\cap F(x,n)=\emptyset.
\]
We also have for every $n>N$, since $\bb\cap \bb'=\emptyset$ and $|f|\leq 1$,
\begin{multline*}
\Big|\int (f\circ\theta_n)d\sigma-\int_{\bb\times\Sigma\times\Sigma}(f\circ\theta_n)d\sigma-\int_{\bb'\times\Sigma\times\Sigma}(f\circ\theta_n)d\sigma\Big|\\
\leq 
1-\sigma(\bb\times\Sigma\times\Sigma)
-
\sigma(\bb'\times\Sigma\times\Sigma)
\leq
|1/2-\mu(\bb)|+|1/2-\mu(\bb')|\leq \delta.
\end{multline*}
\textit{Step 3} We prove now that, for every $n>N$, one has
\begin{equation}\label{inegfinale}
 \Big| \int_{\bb\times\Sigma\times\Sigma}(f\circ\theta_n)d\sigma-\frac{1}{2}\int f d\nu_1\Big|\leq 3\delta.  
\end{equation}
The proof of the corresponding approximation of $\int fd\nu_2$ is similar.

Fix $n>N$ and $(x,E_1,E_2)\in \bb\times\Sigma\times\Sigma$; then, by Step 2, one has
\[
(K_m\cap F(x,n)\cap E_1)^+\cup (K_m\cap F'(x,n)\cap E_2)^+=K_m^+\cap E_1
\]
and, as $\theta_n(x,E_1,E_2)=(F(x,n)\cap E_1)^+\cup(F'(x,n)\cap E_2)^+$, by 
Step 1, we have
\[
|f(\theta_n(x,E_1,E_2))-f(K_m^+\cap E_1)|\leq \delta.
\]
Thus, we have for every $n>N$:
\begin{multline*}
 \Big| \int_{\bb\times\Sigma\times\Sigma}(f\circ\theta_n)d\sigma-\frac{1}{2}\int f d\nu_1\Big|\\
\leq 
\Big| \int_{\bb\times\Sigma\times\Sigma} (f\circ\theta_n)d\sigma-\int f(K_m^+\cap E)d\mu\times\nu_1\times\nu_2\Big|\\
+
\Big|\mu(\bb)\int f(K_m^+\cap E)d\nu_1-\frac{1}{2}\int f(K_m^+\cap E)d\nu_1\Big|\\
+\frac{1}{2}\Big|\int [f(K_m^+\cap E)-f(E)]d\nu_1\Big|\leq 3\delta
\end{multline*}
Altogether, one has for every $n>N$:
\begin{multline*}
\Big|\int (f\circ\theta_n)d\sigma-\frac{1}{2}\Big(\int fd\nu_1+\int fd\nu_2\Big)\Big|\\
\leq \delta+\Big| \int_{\bb\times\Sigma\times\Sigma}(f\circ\theta_n)d\sigma-\frac{1}{2}\int f d\nu_1\Big|\\
+\Big| \int_{\bb'\times\Sigma\times\Sigma}(f\circ\theta_n)d\sigma-\frac{1}{2}\int f d\nu_2\Big|\leq 7\delta=\ep.
\end{multline*}
The proof of Theorem \ref{HAP1} is complete.
\hfill $\Box$

\section{The case of pairs of tracial von Neumann algebras and their binormal states}

In the present section, $(M,\tau)$ denotes a tracial von Neumann algebra and $B\subset M$ a von Neumann subalgebra of $M$ with the same unit. Most of notations come from \cite{Popa} and \cite{AP}.

We denote by $L^2(M,\tau)$ the Hilbert space obtained by completion of $M$ with respect to the scalar product $(x,y)\mapsto \tau(y^*x)$; the embedding of $M$ into $L^2(M,\tau)$ is denoted by $x\mapsto \hat x$. It is equiped with the order two antilinear isometry $J$ defined by $J\hat x=\wh{x^*}$, $x\in M$.
It is a $M$-bimodule, called the \textit{identity bimodule}, with respect to the left and right actions defined by $x\hat y z=\wh{xyz}$, so that $\hat y z=Jz^*J\hat y$, for all $x,y,z\in M$.

\bigskip

Let $\la M,B\ra$ be the von Neumann subalgebra of $B(L^2(M,\tau))$ generated by $M$ (acting on the left) and by the orthogonal projection $e_B$ of $L^2(M,\tau)$ onto $L^2(B,\tau)$. It is called the \textit{basic construction for} $B\subset M$.

In fact, $e_B$ is the extension of the $\tau$-preserving conditional expectation $E_B$ of $M$ onto $B$. As is well known (e.g. \cite[Subsection 1.3.1]{Popa}), $e_Bxe_B=E_B(x)e_B$ for every $x\in M$, $\Span(Me_BM)$ is a dense *-subalgebra of $\la M,B\ra$ and $e_B\la M,B\ra e_B=Be_B$; in particular, $e_B$ is a fintie projection of $\la M,B\ra$. Moreover, $\la M,B\ra=JB'J$, which shows that it is a semifinite von Neumann algebra. Thus, let us denote by $\J(\la M,B\ra)$ the norm-closed ideal of $\la M,B\ra$ generated by the finite projections of $\la M,B\ra$. Thus, an operator $T\in \la M,B\ra$ belongs to $\J(\la M,B\ra)$ if and only if all the spectral projections $1_{[s,\infty)}(|T|)$, $s>0$, are finite projections in $\la M,B\ra$. As recalled above, $e_B\in \J(\la M,B\ra)$.

\subsection{Bimodules, bounded vectors and completely positive maps}

We recall from the introduction that, if $\Hh$ is a $M$-bimodule, a vector $\xi\in \Hh$ is \textit{bounded} if there exists a constant $c>0$ such that $\Vert \xi x\Vert\leq c\Vert x\Vert_2$ and $\Vert x\xi\Vert\leq c\Vert x\Vert_2$. The set of all bounded vectors in $\Hh$ is denoted by $\Hh_0$, and, for $\xi\in \Hh_0$, we denote by $L_\xi:L^2(M,\tau)\rightarrow \Hh$ the bounded operator given by $L_\xi \hat y=\xi y$, $y\in M$. 

\bigskip

A \textit{pointed $(B\subset M)$-bimodule} is a pair $(\Hh,\xi)$, where $\xi\in \Hh$, $\Vert \xi\Vert=1$, and $\xi$ is $B$\textit{-central}, which means that $b\xi=\xi b$ for every $b\in B$. We denote by $\Hh(B)$ the subspace of all $B$-central vectors of $\Hh$ and $\Hh_0(B)$ the set of all bounded, $B$-central vectors in $\Hh$. If $B=\C 1$, we simply call $(\Hh,\xi)$ a \textit{pointed $M$-bimodule}.

\bigskip

Denote by $\CP(M)$ the set of normal, completely positive maps $\phi:M\rightarrow M$, and by $\CP(B\subset M)$ the subset of $B$\textit{-bimodular} ones, i.e. elements $\phi\in \CP(M)$ that satisfy  $\phi(b_1xb_2)=b_1\phi(x)b_2$ for all $x\in M,b_1,b_2\in B$. For instance, the identity map $\id_M: x\mapsto x$ and the conditional expectation $E_B:M\rightarrow B\subset M$ are $B$-bimodular.

\bigskip

Let $\phi\in \CP(M)$ be normalized so that $\tau(\phi(1))=1$. There exists a pointed $M$-bimodule $(\Hh_\phi,\xi_\phi)$ with the following properties \cite[Subsection 1.1.2]{Popa}:
\begin{itemize}
\item $\xi_\phi\in \Hh_\phi$ is a \textit{cyclic} vector, i.e. $\Span(M\xi_\phi M)$ is dense in $\Hh_\phi$;
\item one has $\la x\xi_\phi y|\xi_\phi\ra=\tau(\phi(x)y)$ for all $x,y\in M$; in particular, $\Vert \xi_\phi\Vert^2=\tau(\phi(1))=1$;
\item $\phi$ is $B$-bimodular if and only if $\xi_\phi$ is $B$-central.
\end{itemize}

Next, let us denote by $\CP_\tau(M)$ (resp. $\CP_\tau(B\subset M)$) the set of elements $\phi\in \CP(M)$ (resp. $\CP(B\subset M)$) for which there exists $c>0$ such that $\tau\circ\phi\leq c\tau$ and $\tau(\phi(1))=1$.
Thus, $(\Hh_\phi,\xi_\phi)$ is a pointed $M$-bimodule or a pointed $(B\subset M)$-bimodule depending on whether $\phi\in\CP_\tau(M)$ or $\phi\in \CP_\tau(B\subset M)$.

Every $\phi\in\CP(M)$ for which there exists $c>0$ such that $\tau\circ\phi\leq c\tau$
extends to a bounded operator $\hat\phi\in B(L^2(M,\tau))$ by the formula $\hat\phi(\hat x)=\wh{\phi(x)}$, $x\in M$. It is the case for every element of $\CP_\tau(M)$, and 
if $\phi\in \CP_\tau(B\subset M)$ then one checks that $\hat \phi\in B'\cap \la M,B\ra$ \cite[Lemma 1.2.1]{Popa}.

In fact, the reason of our choice of the above definition of $\CP_\tau(B\subset M)$ is because it is exactly the set of completely positive maps associated with the binormal states whose cyclic vectors are $B$-central and bounded that will be defined in Subsection 4.2 below.

\bigskip

We gather the main properties of bounded vectors in $(B\subset M)$-bimodules in the following proposition whose most part for the case $B=\C 1$ is in \cite[Chapter 13]{AP}. We provide a proof for the reader's convenience.

\begin{proposition}\label{boundedv}
Let $\Hh$ be a $M$-bimodule and $B\subset M$ a von Neumann subalgebra of $M$.
\begin{enumerate}
\item [(1)] The subset $\Hh_0(B)$ of bounded, $B$-central vectors is a dense subspace of $\Hh(B)$; we have moreover $(B'\cap M)\Hh_0(B) (B'\cap M)=\Hh_0(B)$, and for every $\xi\in \Hh_0(B)$ and all $a,b\in B'\cap M$, we have $L_{a\xi b}=aL_\xi b$.
\item [(2)] For all $\xi,\eta\in \Hh_0(B)$ and every $x\in M$ then $L_\eta^* xL_\xi\in M$ and the map $x\mapsto L_\eta^*xL_\xi$ is $B$-bimodular. In particular, the map $\phi_\xi:x\mapsto \phi_\xi(x)\coloneqq L_\xi^* xL_\xi$ belongs to $\CP(B\subset M)$.
\item [(3)] Let $\xi\in \Hh_0(B)$ and let $c>0$ be such that $\Vert x\xi\Vert\leq c\Vert x\Vert_2$ for every $x\in M$. Then the completely positive map $\phi_\xi$ satisfies the following inequality: 
\[
\tau\circ\phi_\xi(x)\leq c^2\tau(x)\quad (x\in M_+).
\]
In particular, if $\xi$ is a unit vector, then $\phi_\xi\in \CP_\tau(B\subset M)$..
\item [(4)] Let $\xi\in \Hh_0(B)$. If $a,b\in B'\cap M$ then
\[
\wh{\phi_{a\xi b}}=b^*Jb^*J\wh{\phi_\xi}a^*Ja^*J.
\]
\end{enumerate}
\end{proposition}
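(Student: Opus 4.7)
The plan is to dispatch the routine parts of (1) first, deduce (2)--(4) by direct operator-algebraic computations, and then isolate the density assertion in (1) as the delicate step.

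For the non-density parts of (1): $\Hh_0(B)$ is obviously a subspace of $\Hh(B)$. For $\xi\in\Hh_0(B)$ with constant $c$ and $a,b\in B'\cap M$, the vector $a\xi b$ is $B$-central (since $a,b$ commute with every $b_0\in B$ and $b_0\xi=\xi b_0$) and two-sided bounded, via $\Vert(a\xi b)y\Vert\leq\Vert a\Vert c\Vert by\Vert_2\leq c\Vert a\Vert\Vert b\Vert\Vert y\Vert_2$ (using $\Vert bu\Vert_2^2=\tau(u^*b^*bu)\leq\Vert b\Vert^2\Vert u\Vert_2^2$), and symmetrically from the left. The identity $L_{a\xi b}=aL_\xi b$ is the one-line check $L_{a\xi b}\hat y=a\xi(by)=aL_\xi(\wh{by})=aL_\xi b\,\hat y$.

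For (2), $L_\xi$ intertwines the right $M$-actions on $L^2(M)$ and $\Hh$, so $L_\eta^*xL_\xi$ commutes with $JMJ$ and thus lies in $(JMJ)'=M$; the $B$-centrality of $\xi$ gives $L_\xi b=bL_\xi$ for $b\in B$ (from $L_\xi(b\hat y)=\xi(by)=(\xi b)y=(b\xi)y=bL_\xi\hat y$), and dually $L_\eta^*b=bL_\eta^*$, whence $L_\eta^*(b_1xb_2)L_\xi=b_1L_\eta^*xL_\xi b_2$. For (3), $\tau(\phi_\xi(x))=\la L_\xi^*xL_\xi\hat 1|\hat 1\ra=\la x\xi|\xi\ra$, so with $x=y^*y$ this equals $\Vert y\xi\Vert^2\leq c^2\Vert y\Vert_2^2=c^2\tau(x)$. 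For (4), the formula of (1) gives $\phi_{a\xi b}(x)=b^*\phi_\xi(a^*xa)b$; since left multiplication by $b^*$ on $L^2(M)$ is the operator $b^*$ and right multiplication by $b$ is $Jb^*J$, the map $y\mapsto b^*yb$ extends to $b^*Jb^*J$, and similarly $y\mapsto a^*ya$ extends to $a^*Ja^*J$, yielding $\wh{\phi_{a\xi b}}=b^*Jb^*J\wh{\phi_\xi}a^*Ja^*J$.

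For the density of $\Hh_0(B)$ in $\Hh(B)$, I would use two successive spectral truncations. Given $\xi\in\Hh(B)$, first consider the densely defined operator $T_\xi\colon\hat M\to\Hh$, $\hat y\mapsto y\xi$; it is closable (left-bounded vectors being dense in $\Hh$ by a standard argument for bimodules, cf.\ \cite[Chapter 13]{AP}), left-$M$-equivariant, and right-$B$-equivariant thanks to $B$-centrality, so $|T_\xi|$ is affiliated with $J(B'\cap M)J$. Set $p_m:=1_{[0,m]}(|T_\xi|)$ and $p^{(m)}:=Jp_mJ\in B'\cap M$; using $(Jp_mJ)^*=Jp_mJ$ and $J^2=1$ one checks $\wh{yp^{(m)}}=p_m\hat y$, so $\xi^{(m)}:=p^{(m)}\xi=\overline{T_\xi}p_m\hat 1$ is left-bounded with constant $m$, still $B$-central, and $\xi^{(m)}\to\xi$ as $m\to\infty$. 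Next, apply the analogous procedure to $L_{\xi^{(m)}}$: right-$M$-equivariance combined with $B$-centrality of $\xi^{(m)}$ makes $|L_{\xi^{(m)}}|$ affiliated with $B'\cap M$, and with $r_n:=1_{[0,n]}(|L_{\xi^{(m)}}|)\in B'\cap M$ the vector $\xi_n^{(m)}:=p^{(m)}\xi r_n$ is right-bounded with constant $n$, still left-bounded with constant $m$ (since $\Vert y\xi_n^{(m)}\Vert=\Vert(y\xi^{(m)})r_n\Vert\leq\Vert y\xi^{(m)}\Vert\leq m\Vert y\Vert_2$), and $B$-central; a diagonal choice in $(m,n)$ delivers the desired approximation.

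The main obstacle, as expected, is this density step and the bookkeeping required to preserve both $B$-centrality and two-sided boundedness under the two successive truncations; the key point is that $|T_\xi|$ and $|L_{\xi^{(m)}}|$ are affiliated with the commuting subalgebras $J(B'\cap M)J$ and $B'\cap M$ of $B(L^2(M))$, which is exactly what allows the combination to go through. The remaining items are then routine operator-algebraic computations.
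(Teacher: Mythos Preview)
Your treatment of parts (2)--(4) and of the routine portions of (1) coincides with the paper's. The difference lies in the density argument in (1). You proceed via unbounded operator theory: form the closable operator $T_\xi:\hat y\mapsto y\xi$, locate $|T_\xi|$ in (an algebra affiliated with) $J(B'\cap M)J$, truncate spectrally on the left, then repeat the procedure on the right with $L_{\xi^{(m)}}$ and combine via a diagonal argument. The paper instead works directly with the normal positive functionals $x\mapsto\la x\xi|\xi\ra$ and $y\mapsto\la\xi y|\xi\ra$: their Radon--Nikodym derivatives $T,S\in L^1(M,\tau)_+$ are shown to lie in $B'$ by a short unitary-conjugation computation, and the single vector $\xi_n=p_n\xi q_n$ with $p_n=1_{[0,n]}(T)$, $q_n=1_{[0,n]}(S)\in B'\cap M$ already does the job, since $\Vert x\xi_n\Vert^2\leq\tau(x^*xp_nTp_n)\leq n\Vert x\Vert_2^2$ and symmetrically on the right. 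This avoids closability, polar decomposition, and the two-stage diagonal extraction; in effect your $|T_\xi|^2$ is (up to $J$-conjugation) the paper's $T$, so you are reconstructing the Radon--Nikodym derivative through the back door. Your argument is correct (the closability step is indeed justified by density of left-bounded vectors, which puts them in the domain of $T_\xi^*$), but the paper's route is shorter and more elementary, and handles both sides simultaneously rather than sequentially.
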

\textsc{Proof.} (1) Fix $\xi\in \Hh(B)$, and let $S,T\in L^1(M,\tau)_+$ be the Radon-Nikodym derivatives of the states
\[
\la x\xi|\xi\ra=\tau(xT)\quad \textrm{and}\quad \la \xi y|\xi\ra=\tau(yS)\quad (x,y\in M).
\]
Fix an element $x\in M$ and a unitary operator $u\in U(B)$. Then one has
\[
\tau(xT)=\la xu\xi u^*|\xi\ra=\la xu\xi|\xi u\ra=\la xu\xi|u\xi\ra=
\la u^*xu\xi|\xi\ra=\tau(u^*xuT)=\tau(xuTu^*)
\]
which shows that $T\in B'\cap L^1(M,\tau)_+$, and the same holds true for $S$.

For every positive integer $n$, set $p_n=1_{[0,n]}(T), q_n=1_{[0,n]}(S)\in B'\cap M$, so that $p_n,q_n\nearrow 1$ strongly and $p_nTp_n, q_nSq_n\leq n$. Set then $\xi_n=p_n\xi q_n$. Then we have
\begin{multline*}
\Vert x\xi_n\Vert^2  
\leq\Vert xp_n\xi\Vert^2=\la p_nx^*xp_n\xi|\xi\ra\\
=\tau(p_nx^*xp_nT)=\tau(x^*xp_nTp_n)\leq n\Vert x\Vert_2^2
\quad (x\in M)
\end{multline*}
and similarly $\Vert \xi_ny\Vert^2\leq n\Vert y\Vert_2^2$, $y\in M$.

As $\Vert \xi-\xi_n\Vert \leq \Vert (1-p_n)\xi\Vert+\Vert \xi(1-q_n)\Vert$, this shows that $\Hh_0(B)$ is dense in $\Hh(B)$. Equalities $(B'\cap M)\Hh_0(B)(B'\cap M)=\Hh_0(B)$ and $L_{a\xi b}=aL_\xi b$ are straightforward.\\
(2) For $\xi,\eta\in \Hh_0(B)$ and $x,y,z,w\in M$,
\begin{multline*}
\la Jz^*JL^*_\eta xL_\xi\hat y|\hat w\ra=\la L^*_\eta xL_\xi\hat y|\wh{wz^*}\ra\\
=\la x\xi y|\eta(wz^*)\ra
=\la x\xi(yz)|\eta w\ra=
\la L^*_\eta xL_\xi Jz^*J\hat y|\hat w\ra,
\end{multline*}
which shows that $L^*_\eta xL_\xi\in M$. Moreover, it is obvious that $L_\xi b=bL_\xi$ for every $b\in B$, and this ends the proof of (2).\\
(3) Let $\xi\in\Hh_0(B)$ and $c>0$ such that $\Vert x\xi\Vert\leq c\Vert x\Vert_2$ for every $x\in M$. Then we get for $x\in M_+$:
\[
\tau\circ\phi_\xi(x)=\la L^*_\xi xL_\xi\hat 1|\hat 1\ra=\la x\xi|\xi\ra=\Vert x^{1/2}\xi\Vert^2
\leq c^2\Vert x^{1/2}\Vert_2^2=c^2\tau(x),
\]
(4) By (1), one has $L_{a\xi b}=aL_\xi b$ and $L^*_{a\xi b}=b^*L^*_\xi a^*$ and thus
\[
\phi_{a\xi b}(x)=b^*L^*_\xi a^*xaL_\xi b=b^*\phi_\xi (a^*xa)b\quad (x\in M)
\]
and
\[
\wh{\phi_{a\xi b}}(\hat x)=\wh{b^*\phi_\xi(a^*xa)b}=b^*Jb^*J\wh{\phi_\xi}(a^*Ja^*J\hat x)
\]
for every $x\in M$.
This ends the proof of Proposition \ref{boundedv}.
\hfill $\Box$

\bigskip
We still need to recall the definition of composition of $M$-bimodules. See for instance \cite[Proposition 13.2.1]{AP}. Let $\Hh$ and $\K$ be such $M$-bimodules. We denote by $\Hh\otimes_M \K$ the $M$-bimodule obtained from the algebraic relative tensor product $\Hh_0\odot_M\K$ equipped with the scalar product given by
\[
\la \xi_1\otimes_M\eta_1|\xi_2\otimes_M\eta_2\ra\coloneqq \la L^*_{\xi_2}L_{\xi_1}\eta_1|\eta_2\ra \quad (\xi_j\in \Hh_0, \eta_j\in\K).
\]
It is gifted with the left and right actions of $M$ given by $x(\xi\otimes_M \eta)y=(x\xi)\otimes_M(\eta y)$.

\begin{lemma}\label{tensor}
Let $\Hh$ and $\K$ be $M$-bimodules and let $\xi\in\Hh_0(B)$ and $\eta\in\K_0(B)$. Then $\xi\otimes_M\eta\in (\Hh\otimes_M\K)_0(B)$ and
\[
\phi_{\xi\otimes_M\eta}=\phi_\eta\circ \phi_\xi.
\]
\end{lemma}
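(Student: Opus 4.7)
The plan is to verify $B$-centrality and boundedness of $\xi \otimes_M \eta$ first, then compute the associated completely positive map directly from its defining inner-product formula. Throughout I will exploit the identity $\phi_\zeta(x) = L_\zeta^* x L_\zeta$ together with the fact (Proposition \ref{boundedv}(2)) that $L_\eta^* a L_\eta \in M$ equals $\phi_\eta(a)$ when acting on $L^2(M,\tau)$.

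For $B$-centrality, since $b\xi = \xi b$ and $b\eta = \eta b$ for every $b \in B$, the definition of the left and right $M$-actions on $\Hh \otimes_M \K$ gives $b(\xi \otimes_M \eta) = (b\xi)\otimes_M \eta = (\xi b)\otimes_M \eta = \xi \otimes_M (b\eta) = \xi \otimes_M(\eta b) = (\xi\otimes_M\eta)b$. For boundedness with respect to the left action, I compute using $L_{x\xi} = xL_\xi$ and the definition of the relative tensor product inner product:
\[
\Vert x(\xi\otimes_M\eta)\Vert^2 = \Vert (x\xi)\otimes_M\eta\Vert^2 = \la L_{x\xi}^* L_{x\xi}\eta|\eta\ra = \la \phi_\xi(x^*x)\eta|\eta\ra = \tau\bigl(\phi_\eta(\phi_\xi(x^*x))\bigr),
\]
where the last equality is the formula $\la y\eta|\eta\ra = \tau(\phi_\eta(y))$ from the proof of Proposition \ref{boundedv}(3). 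Applying Proposition \ref{boundedv}(3) twice bounds this by $(c^\eta c^\xi)^2 \Vert x\Vert_2^2$, where $c^\xi, c^\eta$ are the left-boundedness constants. For the right action,
\[
\Vert (\xi\otimes_M\eta)x\Vert^2 = \Vert \xi\otimes_M(\eta x)\Vert^2 = \la \phi_\xi(1)\eta x|\eta x\ra = \tau\bigl(x^* \phi_\eta(\phi_\xi(1)) x\bigr) \leq \Vert \phi_\eta\circ\phi_\xi(1)\Vert \Vert x\Vert_2^2,
\]
using the trace property and the operator norm bound, which is finite because $\Vert\phi_\xi(1)\Vert = \Vert L_\xi\Vert^2$ and $\Vert\phi_\eta\circ\phi_\xi(1)\Vert \leq \Vert L_\eta\Vert^2 \Vert L_\xi\Vert^2$. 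Hence $\xi\otimes_M\eta \in (\Hh\otimes_M\K)_0(B)$.

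For the composition formula, I compute the inner products that characterize $\phi_{\xi\otimes_M\eta}$ via $\tau(\phi_{\xi\otimes_M\eta}(x)y) = \la x(\xi\otimes_M\eta)y|\xi\otimes_M\eta\ra$ for $x,y \in M$. Unfolding the tensor,
\[
\la x(\xi\otimes_M\eta)y|\xi\otimes_M\eta\ra = \la (x\xi)\otimes_M(\eta y)|\xi\otimes_M\eta\ra = \la L_\xi^* L_{x\xi} (\eta y)|\eta\ra = \la \phi_\xi(x)\eta y|\eta\ra,
\]
since $L_\xi^* L_{x\xi} = L_\xi^* x L_\xi = \phi_\xi(x) \in M$. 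Using once more the identity $L_\eta^* a L_\eta = \phi_\eta(a)$ with $a = \phi_\xi(x)$, one gets $\la \phi_\xi(x)\eta y|\eta\ra = \tau\bigl(\phi_\eta(\phi_\xi(x)) y\bigr)$. Since this holds for every $y \in M$, faithfulness of $\tau$ yields $\phi_{\xi\otimes_M\eta} = \phi_\eta\circ\phi_\xi$.

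There is no real obstacle here: the whole argument is a bookkeeping exercise reducing to the two facts that $L_\zeta^* x L_\zeta$ lies in $M$ and equals $\phi_\zeta(x)$, and that the inner product on $\Hh\otimes_M\K$ is computed through $L_\xi^* L_\xi = \phi_\xi(1)$. The only point requiring minimal care is the right-action bound, where one has to pass from an $\K$-inner product to a trace estimate via the trace property.
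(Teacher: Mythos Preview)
Your proof is correct and follows essentially the same route as the paper's: $B$-centrality via the balancing relation $(\xi b)\otimes_M\eta=\xi\otimes_M(b\eta)$, boundedness by unwinding the tensor inner product through $L_\xi^* x L_\xi=\phi_\xi(x)$, and the composition formula by the same unwinding. The only cosmetic differences are that the paper bounds $\Vert(\xi\otimes_M\eta)y\Vert^2=\la L_\xi^*L_\xi\,\eta y|\eta y\ra$ directly by $\Vert L_\xi\Vert^2\Vert\eta y\Vert^2\leq c^4\Vert y\Vert_2^2$ rather than passing through $\tau(y^*\phi_\eta(\phi_\xi(1))y)$, and it identifies $\phi_{\xi\otimes_M\eta}(x)$ by testing $\la\,\cdot\,\hat y|\hat z\ra$ for all $y,z$ rather than invoking faithfulness of $\tau$ with $z=1$; neither change affects the argument.
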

\textsc{Proof.} Since $\Hh\otimes_M\K$ is a relative tensor product over $M$, one has for every $x\in M$: $(\xi x)\otimes_M\eta=\xi\otimes_M (x\eta)$. In particular, this implies that $b(\xi\otimes_M\eta)=(\xi\otimes_M\eta)b$ for every $b\in B$.

Let now $c>0$ be large enough so that 
\[
\Vert x\xi\Vert,\Vert x\eta\Vert\leq c\Vert x\Vert_2\quad \textrm{and}\quad
\Vert \xi y\Vert,\Vert \eta y\Vert\leq c\Vert y\Vert_2
\]
for all $x,y\in M$.
\\
Fix $x\in M$ first; one has 
\begin{align*}
\Vert x(\xi\otimes_M \eta)\Vert^2
&=
\la (x\xi)\otimes_M \eta| (x\xi)\otimes_M \eta\ra=\la L_{x\xi}^*L_{x\xi}\eta|\eta\ra\\
&=
\la L^*_\xi x^*xL_\xi\eta|\eta\ra=\Vert \phi_\xi(x^*x)^{1/2}\eta\Vert^2\leq c^2\Vert \phi_\xi(x^*x)^{1/2}\Vert_2^2\\
&=
c^2\tau(\phi_\xi(x^*x))=c^2\la L_\xi^*x^*xL_\xi \hat 1|\hat 1\ra\\
&=
c^2 \la x^*x\xi|\xi\ra\leq c^4\Vert x\Vert_2^2.
\end{align*}
Next, for $y\in M$ one has 
\begin{align*}
\Vert (\xi\otimes_M\eta)y\Vert^2
&=
\Vert \xi\otimes_M(\eta y)\Vert^2=\la L_\xi^*L_\xi \eta y|\eta y\ra\\
&\leq
c^2 \Vert \eta y\Vert^2\leq c^4 \Vert y\Vert_2^2.
\end{align*}
This proves that $\xi\otimes_M \eta\in (\Hh\otimes_M\K)_0(B)$. \\
Finally, if $x,y,z\in M$, one has 
\begin{align*}
\la \phi_{\xi\otimes_M\eta}(x)\hat y|\hat z\ra
&=
\la L^*_{\xi\otimes_M\eta}xL_{\xi\otimes_M\eta} \hat y|\hat z\ra=\la (x\xi)\otimes_M(\eta y)|\xi\otimes_M(\eta z)\ra\\
&=
\la L_\xi^*L_{x\xi}\eta y|\eta z\ra=\la L^*_\eta L^*_\xi xL_\xi L_\eta \hat y|\hat z\ra\\
&=
\la \phi_\eta\circ\phi_\xi (x)\hat y|\hat z\ra
\end{align*}
This ends the proof of the present lemma.
\hfill $\Box$

\bigskip

We end the present subsection with the following definition (see \cite[Section 2]{Popa}).

\begin{definition}\label{relHAP}
Let $(M,\tau)$ be a tracial von Neumann algebra and $B\subset M$ a von Neumann subalgebra of $M$.
\begin{enumerate}
\item [(1)] Let $\phi\in \CP(B\subset M)$ which satisfies the following property: there $c>0$ such that $\tau\circ\phi\leq c\tau$, so the associated operator $\hat \phi$ on $L^2(M,\tau)$ is bounded. Then we say that $\phi$ is \textit{$B$-compact} if $\hat\phi\in \J(\la M,B\ra)$.
\item [(2)] We say that $M$ has \textit{property} H \textit{relative to} $B$ if there exists a net of maps $(\phi_i)_{i\in I}\subset \CP(B\subset M)$ that has the following properties:
\begin{enumerate}
\item for every $i\in I$, $\phi_i$ is \textit{subtracial}: $\tau\circ\phi_i\leq \tau$;
\item for every $i\in I$, $\phi_i$ is $B$-compact;
\item for every $x\in M$, one has $\lim_i\Vert \phi_i(x)-x\Vert_2=0$.
\end{enumerate}
\end{enumerate}
\end{definition}

In the special case $B=\C 1$, which corresponds to the Haagerup property for finite von Neumann algebras, it is shown in \cite[Proposition 4]{Jol} that it doen't depend on the trace $\tau$. As mentioned by J. Bannon and J. Fang in \cite[Section 2]{BF}, using \cite[Proposition 2.4.2]{Popa} and an adaptation of \cite[Proposition 4]{Jol} shows that property H relative to $B$ is independent of the trace $\tau$.

\subsection{$B$-mixing binormal states}

Following \cite{EL}, a \textit{binormal state} on $M$ is a linear functional $\f$ on the algebraic tensor product $M\otimes M^\op$ such that:
\begin{itemize}
\item for every $X\in M\otimes M^\op$, one has $\f(X^*X)\geq 0$ and $\f(1\otimes 1^\op)=1$;
\item $\f$ is separately normal: for every $y\in M$, the functionals $x\mapsto \f(x\otimes y^\op)$ and $x\mapsto \f(y\otimes x^\op)$ are normal.
\end{itemize}
We denote here by $\bin(M)$ the convex set of all binormal states on $M$. It is a convex, weak*-dense subset of the state space of the $C^*$-algebra $M\otimes_\bin M^\op$, the latter being the completion of $M\otimes M^\op$ with respect to the $C^*$-norm
\[
\Vert X\Vert_\bin\coloneqq \sup\{\Vert \pi_\f(X)\Vert\colon \f\in\bin(M)\}
\]
where $(\Hh_\f,\pi_\f,\xi_\f)$ denotes the GNS triple associated to $\f$: $\pi_\f$ is a $*$-representation of $M\otimes M^\op$ on the Hilbert space $\Hh_\f$ and $\xi_\f\in \Hh_\f$ is a cyclic and unit vector such that $\f(X)=\la \pi_\f(X)\xi_\f|\xi_\f\ra$ for every $X\in M\otimes M^\op$.

Observe that every $\f\in \bin(M)$ gives rise to the pointed $M$-bimodule $\Hh_\f$ by the following formula:
\[
\la x\xi_\f y|\xi_\f\ra=\f(x\otimes y^\op)\quad (x,y\in M)
\]
and conversely, if $(\Hh,\xi)$ is a pointed $M$-bimodule, we get the following element $\f_\xi$ of $\bin(M)$:
\[
\f_\xi(x\otimes y^\op)=\la x\xi y|\xi\ra\quad (x,y\in M).
\]
Given $\f\in\bin(M)$, it is straightforward to verify that $\xi_\f$ is $B$-central if and only if one has for all $x,y\in M$ and $b\in B$:
\[
\f(xb\otimes y^\op)=\f(x\otimes (by)^\op)=\f(x\otimes y^\op b^\op).
\]
We denote by $\bin(B\subset M)$ the (convex) subset of all elements of $\bin(M)$ whose associated unit vector is $B$-central.

Finally, let $\f\in \bin(B\subset M)$; we say that $\f$ is \textit{$B$-mixing} if its associated vector $\xi_\f\in \Hh_\f$ is bounded and if the corresponding cp map $\phi_{\xi_\f}$ belongs to $\CP_\tau(B\subset M)$ and is $B$-compact. We denote by $\bin_\mix(B\subset M)$ the set of all $B$-mixing binormal states in $\bin(B\subset M)$.

\subsection{Proof of Theorem \ref{HAP2}}

We are ready to state and prove the following relative version of Theorem \ref{HAP2}.

\begin{theorem}
Let $(M,\tau)$ be a tracial von Neumann algebra and let $B\subset M$ be a von Neumann subalgebra that has the same unit as $M$. Then $M$ has property $\mathrm H$ relative to $B$ if and only if $\bin_\mix(B\subset M)$ is weak*-dense in $\bin(B\subset M)$.
\end{theorem}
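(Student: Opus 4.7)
The approach hinges on the bijective correspondence between elements of $\bin(B\subset M)$ and pointed $(B\subset M)$-bimodules, under which $\bin_\mix(B\subset M)$ consists precisely of those states whose cyclic vector is bounded $B$-central with associated completely positive map in $\CP_\tau(B\subset M)$ that is $B$-compact. The three key tools are: Proposition~\ref{boundedv}(1) (density of bounded $B$-central vectors); Lemma~\ref{tensor} (the CP map of a relative tensor product is the composition of the individual CP maps); and the observation that $\J(\la M,B\ra)$ is a norm-closed two-sided ideal.

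For the forward direction, let $(\phi_i)_{i\in I}\subset\CP(B\subset M)$ witness property $\mathrm H$ relative to $B$; each associated pointed bimodule $(\Hh_{\phi_i},\xi_{\phi_i})$ already gives $\f_{\xi_{\phi_i}}\in\bin_\mix(B\subset M)$. For an arbitrary $\f\in\bin(B\subset M)$ with bimodule $(\Hh_\f,\xi_\f)$, I would first apply Proposition~\ref{boundedv}(1) to extract bounded $B$-central vectors $\eta_n\in\Hh_\f$ with $\eta_n\to\xi_\f$, so that $\f_{\eta_n}\to\f$ weak$^*$ after normalization. For each $n$ and $i$, the vector $\eta_n\otimes_M\xi_{\phi_i}\in(\Hh_\f\otimes_M\Hh_{\phi_i})_0(B)$ is bounded $B$-central with associated CP map $\phi_i\circ\phi_{\eta_n}$ by Lemma~\ref{tensor}, and its $L^2$-extension $\wh\phi_i\wh{\phi_{\eta_n}}$ lies in $\J(\la M,B\ra)$ by the ideal property applied to $\wh\phi_i\in\J(\la M,B\ra)$. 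Hence the (normalized) binormal state $\f_{n,i}$ belongs to $\bin_\mix(B\subset M)$. Since
\[
\f_{n,i}(x\otimes y^\op)\;\propto\;\tau\bigl(\phi_i(\phi_{\eta_n}(x))y\bigr)\longrightarrow\tau(\phi_{\eta_n}(x)y)=\f_{\eta_n}(x\otimes y^\op)
\]
as $i\to\infty$ (using $\|\phi_i(z)-z\|_2\to 0$ and $\tau(\phi_i(\phi_{\eta_n}(1)))\to 1$), a diagonal extraction produces a net in $\bin_\mix(B\subset M)$ converging weak$^*$ to $\f$.

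For the converse, note that the identity bimodule $(L^2(M,\tau),\hat 1)$ yields the canonical state $\f_0(x\otimes y^\op)=\tau(xy)\in\bin(B\subset M)$, since $\hat 1$ is bounded $B$-central with $\phi_{\hat 1}=\id_M$. By hypothesis, choose $(\f_i)\subset\bin_\mix(B\subset M)$ with $\f_i\to\f_0$ weak$^*$ and set $\phi_i\coloneqq\phi_{\xi_{\f_i}}\in\CP_\tau(B\subset M)$, which is $B$-compact. Weak$^*$ convergence gives $\tau(\phi_i(x)y)\to\tau(xy)$ for all $x,y\in M$, i.e.\ $\wh\phi_i\to\id$ in the weak operator topology on $L^2(M,\tau)$. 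To promote this to $\|\cdot\|_2$-convergence, expand
\[
\|\phi_i(x)-x\|_2^2=\|\phi_i(x)\|_2^2-2\re\tau(\phi_i(x)x^*)+\|x\|_2^2,
\]
apply the Kadison-Schwarz inequality $\phi_i(x)^*\phi_i(x)\leq\|\phi_i(1)\|\,\phi_i(x^*x)$ to bound $\|\phi_i(x)\|_2^2\leq\|\phi_i(1)\|\,\tau(\phi_i(x^*x))$, and combine $\tau(\phi_i(x^*x))\to\|x\|_2^2$ with Cauchy-Schwarz on the cross term $|\tau(\phi_i(x)x^*)|\leq\|\phi_i(x)\|_2\|x\|_2\to\|x\|_2^2$ to squeeze $\|\phi_i(x)\|_2\to\|x\|_2$ and hence $\|\phi_i(x)-x\|_2\to 0$.

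The main obstacle is that the bounding constants $c_i$ in $\tau\circ\phi_i\leq c_i\tau$ are not \textit{a priori} close to $1$, so the resulting $\phi_i$ are not automatically subtracial as required by Definition~\ref{relHAP}, and moreover $\|\phi_i(1)\|_\infty$ must be controlled for the Kadison-Schwarz squeeze above to close. I expect to handle both issues simultaneously by a cutoff-and-rescaling step: compressing $\phi_i$ by the spectral projection $p_i\coloneqq 1_{[0,1]}(\phi_i(1))\in M$ (or replacing $\phi_i$ by a Popa-style convex combination with $E_B$), which preserves $B$-compactness via the ideal property of $\J(\la M,B\ra)$, and then passing to a cofinal subnet along which $\tau(\phi_i(1))=1$ together with $\f_i\to\f_0$ forces the spectral mass of $\phi_i(1)$ outside $[0,1]$ to vanish, restoring simultaneously subtraciality and the required norm control.
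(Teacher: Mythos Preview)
Your forward direction is essentially the paper's: approximate $\xi_\f$ by a bounded $B$-central vector, tensor with the pointed bimodule of a witnessing $B$-compact map, and use Lemma~\ref{tensor} together with the ideal property of $\J(\la M,B\ra)$. No issues there.

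The converse, however, has a genuine gap at exactly the point you flag. Weak$^*$ convergence $\f_i\to\f_0$ gives only $\tau(\phi_i(1)y)\to\tau(y)$ for $y\in M$, i.e.\ $\phi_i(1)\to 1$ in the $\sigma(L^1,M)$ sense; this does \emph{not} force the spectral mass of $\phi_i(1)$ above level $1$ to vanish. A model obstruction in $L^\infty[0,1]$: positive elements $h_i=2\cdot 1_{A_i}$ with $|A_i|=1/2$ and $1_{A_i}\rightharpoonup 1/2$ weakly satisfy $\tau(h_i)=1$ and $h_i\rightharpoonup 1$, yet $1_{(1,\infty)}(h_i)=1_{A_i}$ has trace $1/2$ throughout. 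So the spectral cutoff $p_i=1_{[0,1]}(\phi_i(1))$ need not converge to $1$ in any useful sense, and even when it does it addresses subunitality, not the subtraciality required by Definition~\ref{relHAP}. The alternative ``Popa-style convex combination with $E_B$'' does not help either: $t\phi_i+(1-t)E_B$ is subtracial only if $\phi_i$ already is. Without the upper bound $\limsup_i\Vert\phi_i(x)\Vert_2\le\Vert x\Vert_2$, your Kadison--Schwarz squeeze cannot close.

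The paper's route around this is different in kind. From $\f_i\to\f_0$ one first extracts a \emph{vector-level} fact, namely that the cyclic vectors become asymptotically central: $\Vert x\xi_i-\xi_i x\Vert^2=\tau(\phi_i(x^*x))+\tau(\phi_i(1)xx^*)-2\re\tau(\phi_i(x)x^*)\to 0$, together with $\la x\xi_i|\xi_i\ra\to\tau(x)$ and $\la\xi_i x|\xi_i\ra\to\tau(x)$. One then invokes \cite[Lemma~13.3.11]{AP} (convex combinations of the $\xi_i$ to upgrade pointwise convergence of the marginal states to norm convergence in $M_*$) followed by \cite[Lemma~13.1.11]{AP} (a perturbation of the $\xi_i$ making the associated maps subtracial and subunital). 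Both steps act on the bounded $B$-central vectors and hence preserve $B$-bimodularity and $B$-compactness via Proposition~\ref{boundedv}(4) and the ideal property. Only \emph{after} this modification does the estimate $\Vert\phi_i(x)-x\Vert_2^2\le 2\tau(x^*x)-\tau(\phi_i(x^*)x)-\tau(\phi_i(x)x^*)$ (now using $\phi_i(1)\le 1$) go through. The missing ingredient in your argument is precisely this passage through almost-central vectors and the two Anantharaman--Popa lemmas; a direct spectral truncation of $\phi_i(1)$ cannot replace it.
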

\textsc{Proof.} 
Assume that $M$ has property H relative to $B$. By \cite[Theorem 2.2]{BF}, there exists a net $(\psi_\be)_{\be\in \Gm} \subset \CP_\tau(B\subset M)$ such that, furthermore, $\psi_\be(1)=1$, $\tau\circ \psi_\be=\tau$ and $\wh{\psi_\be}\in \J(\la M,B\ra)$ for every $\be\in \Gm$, and such that
\[
\lim_{\be\in\Gm}\Vert \psi_\be(x)-x\Vert_2=0\quad (x\in M).
\]
In particular, $(\psi_\be)_{\be\in\Gm}\subset \CP_\tau(B\subset M)$. Let then $\f\in \bin(B\subset M)$, a finite set $F\subset M$ and $\ep>0$. By Proposition \ref{boundedv}, there exists a unit vector $\xi\in (\Hh_\f)_0(B)$ such that
\[
\Vert \xi_\f-\xi\Vert\leq \frac{\ep}{2(1+\min_{x,y\in F}\Vert x\Vert\Vert y\Vert)}.
\]
There exists then $\psi_\be \eqqcolon \phi\in \CP_\tau(B\subset M)$ such that $\hat\phi\in \J(\la M,B\ra)$ and
\[
\max_{x\in F}\Vert \phi(\phi_\xi(x))-\phi_\xi(x)\Vert_2<\frac{\ep}{2(1+\max_{y\in F})}.
\]
By Lemma \ref{tensor}, $\phi\circ\phi_\xi=\phi_{\xi\otimes_M\xi_\phi}$ is $B$-bimodular and $B$-compact. Let then $\psi\in \bin_\mix(B\subset M)$ be defined by
\[
\psi(x\otimes y^\op)=\tau(\phi_{\xi\otimes_M\xi_\phi}(x)y)=\la x(\xi\otimes_M\xi_\phi)y|\xi\otimes_M\xi_\phi\ra
\quad (x,y\in M)
\]
One has for all $x,y\in F$:
\begin{align*}
|\f(x\otimes y^\op)-\psi(x\otimes y^\op)|
&=
|\la x\xi_\f y|\xi_\f\ra-\tau(\phi\circ\phi_\xi(x)y)|\\
&\leq
|\la x\xi_\f y|\xi_\f\ra-\la x\xi y|\xi\ra|+|\tau(\phi_\xi(x)y)-\tau(\phi\circ\phi_\xi(x)y)|\\
&\leq
\frac{\ep}{2}+|\la x\xi_\f y|\xi_\f\ra-\la x\xi y|\xi\ra|\\
&\leq
\frac{\ep}{2}+|\la x(\xi_\f-\xi)y|\xi_\f\ra|+|\la x\xi y|\xi_\f-\xi\ra|\\ 
&\leq
\frac{\ep}{2}+2\Vert x\Vert \Vert y\Vert \Vert \xi_\f-\xi\Vert\leq \ep.
\end{align*}
This shows that $\bin_\mix(B\subset M)$ is dense in $\bin(B\subset M)$.\\
Conversely, suppose that $\bin_\mix(B\subset M)$ is dense in $\bin(B\subset M)$ and let $\f_\tau\in \bin(B\subset M)$ be defined by $\f_\tau(x\otimes y^\op)=\tau(xy)$ for all $x,y\in M$. This means that $\phi_{\f_\tau}=\id_M$.
By hypothesis, there exists a net $(\phi_i)_{i\in I}\subset \CP_\tau(B\subset M)$ such that each $\phi_i$ is $B$-compact and
\[
\lim_{i\in I}\tau(\phi_i(x)y)=\tau(xy)\quad (x,y\in M).
\]
Set then $\xi_i=\xi_{\phi_i}$ for every $i\in I$. One 
has for every $x\in M$:
\begin{align*}
\Vert x\xi_i-\xi_i x\Vert^2
&=
\la x^*x\xi_i|\xi_i\ra+\la \xi_i xx^*|\xi_i\ra-2\re\la x\xi_i|\xi x\ra\\
&=
\tau(\phi_i(x^*x))+\tau(\phi_i(1)xx^*)-2\re\tau(\phi_i(x)x)\\
&\to_{i\in I}
\tau(x^*x)+\tau(xx^*)-2\re\tau(xx^*)=0
\end{align*}
and 
\[
\lim_{i\in I}\la x\xi_i|\xi\ra=\lim_{i\in I}\la \xi_i x|\xi_i\ra=\tau(x).
\]
By \cite[Lemma 13.3.11]{AP}, replacing the $\xi_i$'s by some of their suitable finite convex combinations first, and then perturbating the latter as in \cite[Lemma 13.1.11]{AP}, we assume that the $\phi_i$'s are furthermore subtracial and satisfy:
\[
\lim_{i\in I}\Vert \la\cdot\ \xi_i|\xi_i\ra-\tau\Vert=
\lim_{i\in I}\Vert \la \xi_i\cdot|\xi_i\ra-\tau\Vert=0.
\]
We observe that, as in the proof of Proposition \ref{boundedv}, this process will affect neither $B$-bimodularity nor $B$-compactness of the $\phi_i$'s.

Then, given a finite set $F\subset M$ and $\ep>0$, set $F_1=F\cup F^*\cup\{x^*x\colon x\in F\cup F^*\}$
and
let us choose $i\in I$ so that
\[
\max_{x\in F_1}\{|\tau(x)-\la x\xi_i|\xi_i\ra|+|\tau(x)-\la \xi_i x|\xi_i\ra|,
\Vert x\xi_i-\xi_i x\Vert\}\leq \frac{\ep}{2(1+\max_{y\in F}\Vert y\Vert)}.
\]
As in the proof of \cite[Proposition 13.3.10]{AP}, we get for every $x\in F$, since $\phi_i(1)\leq 1$, using Schwarz inequality:
\begin{align*}
\Vert \phi_i(x)-x\Vert_2^2
={} &
\tau(\phi_i(x^*)\phi_i(x))+\tau(x^*x)-\tau(\phi_i(x^*)x)-\tau(\phi_i(x)x^*)
\\
\leq {} &
2\tau(x^*x)-\tau(\phi_i(x^*)x)-\tau(\phi_i(x)x^*)\\
={} &
(\tau(x^*x)-\la x^*\xi_i x|\xi_i\ra))+(\tau(xx^*)-\la x\xi_i x^*|\xi_i\ra)\\
\leq {} &
|\tau(x^*x)-\la x^*x\xi_i|\xi_i\ra|+|\tau(xx^*)-\la \xi_i xx^*|\xi_i\ra|\\
 &
+2\Vert x\Vert\Vert x\xi_i-\xi_i x\Vert\\
\leq {} & \ep.
\end{align*}
This ends the proof of Theorem \ref{HAP2}
\hfill $\Box$

\par\vspace{1cm}

\bibliographystyle{plain}
\bibliography{refFixed2}

\end{document}